\DeclareMathOperator{\vol}{Vol}
\numberwithin{equation}{section}
\newcommand{\diag}{\mathrm{diag}}
\newcommand{\volfs}{\vol_{{\mathrm{FS}}}}
\newcommand{\abs}[1]{\left\vert#1\right\vert}
\newcommand{\norm}[1]{\left\Vert#1\right\Vert}
\newcommand{\bb}{\mathfrak{B}}
\newcommand{\cc}{\mathfrak{C}}
\newcommand{\p}{\mathbb{P}}
\newcommand{\n}{\mathsf{N}}
\newcommand{\s}{\mathsf{S}}
\newcommand{\T}{\mathsf{T}}
\newcommand{\D}{\mathsf{D}}
\newcommand{\ol}{\overline}
\newcommand{\rl}{\mathbb{R}}
\newcommand{\cx}{\mathbb{C}}
\newcommand{\ipr}[1]{\left\langle #1 \right\rangle}
\newcommand{\gipr}[1]{\left( #1 \right)}
\newcommand{\dist}{\mathrm{dist}}
\newcommand{\dbar}{\overline{\partial}}
		\newtheorem{thm}{Theorem}[section]
	\newtheorem{lem}[thm]{Lemma}
		\newtheorem{prop}[thm]{Proposition}
	\newtheorem*{thm*}{Theorem}
    	\newtheorem*{lem*}{Lemma}
\newtheorem*{propa}{Proposition A}
\newtheorem*{propb}{Proposition B}
\newtheorem*{propc}{Proposition C}
	\theoremstyle{definition}
\title[Volume of a Quadric]{Volume of the domain bounded by
a Hermitian quadric in complex projective space}
\author{Joyita Banerjee Ganguly}
\author{Debraj Chakrabarti}
\author{Meera Mainkar}
\address{Department of Mathematics, Central Michigan University, 
Mt. Pleasant, MI 48859,USA}
\email{baner1j@cmich.edu}
\email{chakr2d@cmich.edu}
\urladdr{https://people.se.cmich.edu/chakr2d/} 
\email{maink1m@cmich.edu}
\subjclass[2020]{51M25,32Q02,}
 \thanks{Debraj Chakrabarti was partially supported by a grant from the NSF (DMS--2153907) and a grant from the Simons foundation (No. 706445).}       
\begin{document}

\begin{abstract} 
We  compute explicitly the Riemannian 
volume, with respect to the Fubini-Study metric, of a domain bounded by a Hermitian quadric in complex projective space. The volume is a rational function of the eigenvalues 
of the defining quadratic form. 
\end{abstract}
	 	\maketitle
\section{Introduction}
\subsection{Volumes of Hermitian Quadrics} Let $A$ be an $(n+1)\times (n+1)$ Hermitian matrix, $n\geq 1$. We define the \emph{quadric domain}  determined by $A$ to be the open subset of the $n$-dimensional complex projective space $\cx\p^n$ given by
\begin{equation}
    \label{eq-omegaa}
    \Omega(A)=\{[z]\in \cx\p^n: \ipr{Az,z}>0\},
\end{equation}
where for a point $z\in \cx^{n+1}\setminus\{0\}$, we denote by $[z]\in \cx\p^n$ its projection  under the quotient map $\cx^{n+1}\setminus\{0\}\to \cx\p^n$, and $\ipr{z,w}=\sum_{j=1}^{n+1}z_j \ol{w_j}$ is the standard 
inner product of $\cx^{n+1}$. Quadric domains and their boundaries (the \emph{Hermitian hyperquadrics,} see \cite{cartanb}) are fundamental model domains in complex analysis and CR geometry (see, e.g.,\cite{ebenfelt,boggess}). For example, when the matrix $A$ has one negative eigenvalue and the other $n$ eigenvalues are positive, the domain $\Omega(A)$ is biholomorphic to the unit ball of $\cx^n$, undoubtedly  the single most important domain in  complex analysis (see \cite{rudin}).

Complex projective space admits the Fubini-Study metric, a Kähler metric invariant under the projective unitary self-maps of $\cx\p^n$. In  this paper we  compute the Riemannian volume of the quadric domain $\Omega(A)$ with respect to the Fubini-Study metric. The spectral theorem and the invariance of metric under the unitary group together imply that the volume is a  function of the eigenvalues of  $A$ (which are  real since $A$ is Hermitian), and this function
is symmetric separately in the  positive and the negative eigenvalues. Interestingly,  the  explicit formula for the volume (see \eqref{eq-explicit} below) turns out to be a \emph{rational} function of these eigenvalues. In contrast, in the real elliptic plane $\rl\p^2$  the analogous formula for the area of the region bounded by a conic is
transcendental in the eigenvalues of the defining matrix; see \cite{realprojective},
where the area is found to be a complete elliptic integral with the eigenvalues as parameters.
As far as we have been able to ascertain, the existence of a rational closed form expression for 
the volume of $\Omega(A)$ and its precise expression proved here has not appeared  in the literature before. 

\begin{thm*}\label{Th:Main theorem} Let $n$ be a positive integer and 
let $p$, $q$ and $r$ be nonnegative integers, such  $n=p+q+r-1$. Let $A$ be an $(n+1)\times(n+1)$ Hermitian matrix with $p$ positive eigenvalues $\mu_1,\ldots,\mu_p$, $q$ negative eigenvalues $-\nu_1,\ldots,-\nu_q$, (where each $\nu_j>0$) and $r$  eigenvalues each equal to 0. 
Then the Riemannian volume of  quadric domain $\Omega(A)\subset \mathbb{CP}^{n}$ with respect
to the Fubini-Study metric is given by
\begin{equation}
\label{eq-explicit}
\volfs(\Omega(A))=\volfs(\cx\p^n)\cdot\frac{\s_{(p,q)}(\mu,\nu)}{\D_{(p,q)}(\mu,\nu)},
\end{equation}
where  each of $\s_{(p,q)}$ and $\D_{(p,q)}$ is a homogeneous polynomial of degree $pq$ in the 
$p+q$ variables 
$\mu_1,\ldots,\mu_p, \nu_1,\ldots,\nu_q $ with nonnegative integer coefficients, which are given below in \eqref{eq-Sdef} and \eqref{eq-Ddef} respectively.
\end{thm*}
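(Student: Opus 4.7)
My plan is to convert the Fubini--Study volume of $\Omega(A)$ into an explicit probability on the standard simplex, which I will then evaluate via Laplace transforms.

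\textbf{Step 1: Diagonalization and reduction to the simplex.} By the $PU(n+1)$-invariance of the Fubini--Study metric and the spectral theorem, I may assume $A = \diag(\mu_1, \dots, \mu_p, -\nu_1, \dots, -\nu_q, 0, \dots, 0)$, so that the defining condition $\ipr{Az, z} > 0$ reads $\sum_i \mu_i |z_i|^2 > \sum_j \nu_j |z_{p+j}|^2$, depending only on the moduli. Via the Hopf fibration $S^{2n+1} \to \cx\p^n$ (whose $S^1$-fibers have length $2\pi$), one has $\volfs(\Omega(A)) = \frac{1}{2\pi}\sigma(\tilde\Omega(A))$, where $\sigma$ is the round measure and $\tilde\Omega(A)$ is the preimage on the sphere. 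A direct affine-chart computation (or the Duistermaat--Heckman theorem) shows that the pushforward of $\sigma$ under $Z \mapsto (|Z_1|^2, \dots, |Z_{n+1}|^2)$ is uniform on the standard simplex $\Delta^n \subset \rl^{n+1}$, so
\[
\frac{\volfs(\Omega(A))}{\volfs(\cx\p^n)} = \frac{\vol\bigl(\{t \in \Delta^n : \sum_i \mu_i t_i > \sum_j \nu_j t_{p+j}\}\bigr)}{\vol(\Delta^n)}.
\]

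\textbf{Step 2: Probabilistic reformulation.} Using the standard representation of the uniform distribution on $\Delta^n$ as $(E_k/\sum_m E_m)_k$ for i.i.d.\ exponentials $E_1, \dots, E_{n+1} \sim \mathrm{Exp}(1)$, together with scale-invariance of the inequality in $t$, the ratio above becomes the hypoexponential probability
\[
P\Bigl(\sum_{i=1}^p \mu_i E_i > \sum_{j=1}^q \nu_j F_j\Bigr),
\]
where $F_j := E_{p+j}$. The $r$ exponentials paired with the zero eigenvalues drop out entirely, which explains why the answer is independent of $r$.

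\textbf{Step 3: Explicit evaluation.} Assume first that the $\mu_i$ are pairwise distinct and so are the $\nu_j$ (a dense open condition). Partial-fraction decomposition of the Laplace transform $\mathbb{E}[\exp(-sX)] = \prod_i (1 + s\mu_i)^{-1}$ yields the density $f_X(x) = \sum_i \frac{\mu_i^{p-2}}{\prod_{k \neq i}(\mu_i - \mu_k)} e^{-x/\mu_i}$ for $x>0$, and analogously for $Y$. Direct integration gives
\[
P(X > Y) = \sum_{i=1}^p \sum_{j=1}^q \frac{\mu_i^{\,p}\, \nu_j^{\,q-1}}{(\mu_i + \nu_j)\, \prod_{k \neq i}(\mu_i - \mu_k)\, \prod_{\ell \neq j}(\nu_j - \nu_\ell)}.
\]
Putting this sum over the common denominator $\D_{(p,q)} := \prod_{i,j}(\mu_i + \nu_j)$ (manifestly a homogeneous polynomial of degree $pq$ with nonnegative integer coefficients) produces a polynomial numerator $\s_{(p,q)}$: the apparent simple poles on the diagonals $\mu_i = \mu_k$ and $\nu_j = \nu_\ell$ must cancel, since the full sum is symmetric in the $\mu$'s and in the $\nu$'s separately and the left-hand side is smooth there. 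Positivity of $P(X > Y)$ on the positive orthant forces the coefficients of $\s_{(p,q)}$ to be nonnegative.

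\textbf{Main obstacle.} The hardest step will be recognizing the polynomial that emerges after combining the partial-fraction summands as the explicit combinatorial expression \eqref{eq-Sdef}; I expect this to require a symmetric-function manipulation (a Cauchy-type determinant identity, or a Schur-polynomial expansion) rather than a brute-force collection of terms. Once the identity is verified on the dense subset of distinct eigenvalues, continuity of $\volfs(\Omega(A))$ in $A$ and of $\s_{(p,q)}/\D_{(p,q)}$ on $\{\mu_i, \nu_j > 0\}$ extends it to the general case; the trivial edge cases $p = 0$ (where $\Omega(A) = \emptyset$) and $q = 0$ (where $\Omega(A)$ has full measure) are checked separately.
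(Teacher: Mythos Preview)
Your approach is valid and genuinely different from the paper's. The paper proceeds by double induction: it introduces a class $\mathscr{N}$ of double sequences $\{N_{(p,q)}\}$ satisfying a base condition, a duality relation $N_{(p,q)}(x,y)+N_{(q,p)}(y,x)=\D_{(p,q)}(x,y)$, and a recursion in $q$; it then shows that $\mathscr{N}$ has at most one element, that the (suitably normalized) volume lies in $\mathscr{N}$, and that the Schur-polynomial expression $\s_{(p,q)}$ lies in $\mathscr{N}$. Your route via the moment map to the simplex and the hypoexponential probability $P\bigl(\sum_i\mu_iE_i>\sum_j\nu_jF_j\bigr)$ bypasses the induction entirely and handles the $r$ zero eigenvalues for free, where the paper integrates them out explicitly. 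Your double sum collapses, via the elementary partial-fraction identity
\[
\sum_{j=1}^{q}\frac{\nu_j^{\,q-1}}{(\mu_i+\nu_j)\prod_{\ell\neq j}(\nu_j-\nu_\ell)}=\frac{\mu_i^{\,q-1}}{\prod_{j}(\mu_i+\nu_j)},
\]
to the single sum $\sum_{i}\mu_i^{\,p+q-1}\big/\bigl(\prod_{k\neq i}(\mu_i-\mu_k)\prod_j(\mu_i+\nu_j)\bigr)$, and the paper's Proposition~\ref{prop-spqalternative} is \emph{precisely} the identification of this with $\s_{(p,q)}/\D_{(p,q)}$, proved there by a first-row cofactor expansion together with the dual Cauchy identity \eqref{eq-dualcauchy}. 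So your ``main obstacle'' is already isolated in the paper as a standalone lemma, and no further symmetric-function machinery is needed.

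One actual error: your claim that ``positivity of $P(X>Y)$ on the positive orthant forces the coefficients of $\s_{(p,q)}$ to be nonnegative'' is false in general (consider $x^2-xy+y^2$ on $\rl_+^2$). Nonnegativity and integrality of the coefficients are not consequences of the volume interpretation; they follow from the definition \eqref{eq-Sdef} and the fact that Schur polynomials have nonnegative integer coefficients. Since $\s_{(p,q)}$ is \emph{defined} combinatorially and your task is only to identify your numerator with it, this claim comes for free once the identification is made and should be dropped from Step~3.
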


\subsection{The denominator $\D_{(p,q)}$}
Let $x=(x_1,\dots,x_p)$ and $y=(y_1,\dots, y_q)$ be $p$- and $q$-tuples of variables. The denominator $\D_{(p,q)}$ in \eqref{eq-explicit} is the polynomial in the variables $x_1,\dots,x_p, y_1,\dots, y_q$ given by
\begin{equation}
    \label{eq-Ddef}
    \D_{(p,q)}(x,y)={\prod\limits\limits_{k=1}^{q}\prod\limits\limits_{j=1}^{p}(x_j+y_k)},
\end{equation}
where as usual an empty product is interpreted as 1. Clearly,
$\D_{(p,q)}$ is a polynomial with nonnegative integer coefficients, and is symmetric in the $p$ variables $(x_1,\dots, x_p)$ as well as the $q$ variables $(y_1,\dots, y_q)$.

\subsection{Schur Polynomials} In order to describe the numerator
$\s_{(p,q)}$ we begin by recalling the classical \emph{Schur polynomials} (see, e.g., \cite{macdonald} for a detailed survey, \cite{fulton} for a masterly combinatorial approach,  and \cite{egge} for an  elementary account). These arise in mathematics in many places, for example in representation theory as characters of irreducible polynomial representations of the general linear groups. 

Recall 
that a \emph{partition} $\lambda=(\lambda_1,\dots, \lambda_n)$ is a tuple of nonnegative integers satisfying $\lambda_1\geq \lambda_2\geq \dots \geq \lambda_n$. The nonzero entries in the tuple $\lambda$ are referred to as the \emph{parts} of $\lambda$, and we identify two partitions if
they have the same parts. In particular, we admit a unique partition with zero parts, the empty partition, denoted by $0$.

Let $n \ge 1$ and $\lambda$ be a partition with at most $n$ parts. The \emph{Schur polynomial} $s_\lambda(x_1, \ldots, x_n)$ is defined as
\begin{align}
    s_\lambda(x_1, \ldots, x_n) = \frac{a_{\lambda+\delta_n}(x_1, \ldots, x_n)}{a_{\delta_n}(x_1, \ldots, x_n)},\label{eq-defschurpoly}
\end{align}
where $(x_1, \ldots, x_n)$ is a tuple of variables, $\delta_n$  is the partition $(n-1, n-2, \ldots, 0)$, 
\begin{align}
    a_{\delta_n}(x_1, \ldots, x_n) = \det(x_i^{\delta_n(j)})_{1 \le i,j \le n}= \det(x_i^{n-j})_{1 \le i,j \le n}=
{\displaystyle \prod _{1\leq i<j\leq n}(x_{i}-x_{j})},\label{eq-defadeltan}
\end{align}
is the \emph{Vandermonde alternating polynomial}, and 
\begin{align}
    a_{\lambda+\delta_n}(x_1,\cdots,x_n)=\det(x_\ell^{\lambda_j+\delta_n(j)})_{1\le j,\ell \le n}= \det(x_\ell^{\lambda_j+n-j})_{1\le j,\ell \le n}.\label{eq-defalambdaplusdeltan}
\end{align}
Since a determinant is an alternating function of its columns, and the partition $\lambda+\delta_n$ has distinct parts, it is easy to see that the polynomial $a_{\lambda+\delta_n}$ is alternating and nonzero.
Since $a_{\lambda+\delta_n}$ vanishes if two of the variables $x_i, x_j$ are equal, it therefore is divisible by $x_i-x_j$ for $i\not=j$, and therefore by their product the 
Vandermonde polynomial $a_{\delta_n}$. Therefore $s_\lambda$ is indeed a polynomial, and since both 
numerator and denominator of \eqref{eq-defschurpoly} are alternating, it follows that $s_\lambda$ is symmetric. 
For the empty partition, one sets $s_{0}\equiv 1$,  consistently with \eqref{eq-defschurpoly}.

\subsection{The numerator $\s_{(p,q)}$}
Denote by $\bb(p,q)$ the collection of partitions with at most $p$ parts, each of  the parts being at most $q$. The \emph{conjugate partition} $\lambda'$ to the partition $\lambda$ is defined by taking $\lambda_k'$ to be the largest integer $j$ such that $\lambda_j\geq k$, i.e.  $\lambda'_k = \#\{j: \lambda_j \ge k\}.$ This has a well-known interpretation  in terms of a transposed Ferrer's diagram. 
For $\lambda\in \bb(p,q) $ (see below) let $\lambda_{p,q}^*$ be the partition given by 
$(\lambda_{p,q}^*)_j= p-\lambda_{q-j+1}'$, i.e.,
\begin{equation}
\label{eq-starpqdef}
\lambda_{p,q}^*=(p-\lambda_q', \dots, p- \lambda_1'), \quad\text{ if $q\geq 1$} \quad \text{  and } \lambda_{p,0}^*=0.
\end{equation}
It is not difficult to see that $\lambda_{p,q}^*\in \bb(q,p).$ Introduce a subcollection of $\bb(p,q)$ by setting
\begin{equation}
    \label{eq-cdef}
    \cc(p,q)=\{\lambda=(\lambda_1,\dots, \lambda_p)\in \bb(p,q): \lambda_1=q\},
\end{equation}
consisting of those partitions in $\bb(p,q)$ in which the largest part is $q$. 
The numerator $\s_{(p,q)}$ in \eqref{eq-explicit} is given by
\begin{equation}
\label{eq-Sdef}
\s_{(p,q)}(x,y)= \sum_{\lambda\in \cc(p,q)}s_\lambda(x) s_{\lambda^*_{p,q}}(y),
\end{equation}
where $x$ and $y$ have the same meaning as in \eqref{eq-Ddef}. This completes the description of the numerator and denominator in \eqref{eq-explicit}.

\subsection{Related results and special cases in the literature} The only quadrics that bound regions of finite volume in Euclidean space are 
ellipsoids. The familiar formulas $\pi r^2$ for the area of a planar disc, $\frac{4}{3}\pi r^3$ for the volume of a three-dimensional ball and $\pi ab$ for the area in the plane bounded by
an ellipse
are  found in the writings of Euclid and Archimedes. Archimedes' formula for the area bounded by an ellipse and its proof (see \cite[pp. 113 ff.]{archimedes}) can be generalized to higher dimensions: for some positive integer $n$, if $Q$ is an $n\times n$ strictly positive definite symmetric matrix with eigenvalues $\lambda_1,\dots, \lambda_n$ (repeated according to multiplicity), then 
\begin{equation}
    \label{eq-ellipsoid}
    \vol_{\rl^n}\left\{x\in \rl^n: \gipr{Qx,x}< 1\right\} =\dfrac{\pi^{\frac{n}{2}}}{\Gamma\left(\frac{n}{2}+1\right)}\cdot  \frac{1}{\sqrt{\lambda_1 \cdots\lambda_n}}=\dfrac{\pi^{\frac{n}{2}}}{\Gamma\left(\frac{n}{2}+1\right)}\cdot  \frac{1}{\sqrt{\det Q}} ,
\end{equation}
where $\gipr{x,y}= \sum_{j=1}^n x_j y_j$ is the standard inner product of $\rl^n$,  and the second form of the formula
allows us to compute the  volume of an ellipsoid directly from the coefficients of its algebraic equation.

In {complex} Euclidean space, \eqref{eq-ellipsoid} is simplified: for an $n\times n$
strictly positive definite Hermitian matrix $A$ with eigenvalues $\lambda_1,\dots, \lambda_n$, we have, 
\[\vol_{\cx^n}\left\{z\in \cx^n: \ipr{Az,z}< 1\right\} =\dfrac{\pi^n}{n!}\cdot  \frac{1}{{\lambda_1 \cdots\lambda_n}}=\dfrac{\pi^n}{n!}\cdot  \frac{1}{{\det A}},\]
where $\ipr{z,w}=\sum_{j=1}^{n}z_j \ol{w_j}$ is the standard 
inner product of $\cx^{n}$. 

The simplicity of the complex formula above compared to the real formula \eqref{eq-ellipsoid} is a general phenomenon 
in Hermitian geometry, at the bottom of which lies the  relation between the real and the complex determinant of a $\cx$-linear map: $\det_\rl T =\abs{\det_\cx T}^2.$ Other examples of such simplification include the ``complex Pythagorean theorem", and  the rationality  of \eqref{eq-explicit}, in contrast to the transcendental nature of the analogous formula in real elliptic space. 

  The Fubini-Study metric makes $\cx\p^n$
into a homogeneous Kähler manifold, and the resulting ``Hermitian elliptic space" is of fundamental importance in algebraic and differential geometry  
(see, e.g., \cite{gh,mum}). The importance of metric quantities  in Hermitian elliptic geometry is underscored by the  well-known Wirtinger formula: for an $r$-dimensional smooth projective variety $Z$ in $\cx\p^n$, we have
\[\volfs{Z}=\volfs{\cx\p^r}\cdot\deg(Z) , \]
where $\volfs$  denotes the $2r$-dimensional Riemannian volume with respect to the Fubini-Study metric,
and $\deg(Z)$ is the degree of $Z$. It follows that the volume of a smooth projective variety is  computable algebraically
from the  equations of $Z$. In view of this, it is surely interesting to try to compute 
volumes  of  objects in projective space, and in particular of open sets such as \eqref{eq-omegaa} with simple algebraic definitions. Notice that \eqref{eq-explicit} also allows a purely algebraic
computation of the volume, provided we can determine the eigenvalues. 

A few special cases of volumes of quadric domains in $\cx\p^n$ are known in the literature, though not in the form \eqref{eq-explicit}. 
In $\cx\p^1$, the only 
interesting quadric domain is a disc, corresponding to  $n=p=q=1$. The Fubini-Study metric can be 
identified with the induced round metric of a sphere in $\rl^3$, traditionally normalized to radius $\frac{1}{2}$,  and the  formula $\pi \sin^2 \rho$ for the area of a metric disk of geodesic radius $\rho$ on 
the sphere is elementary and well-known.
Generalizing the disc, one can consider domains of the form
\begin{equation}
    \{w\in \cx\p^n: \dist_{FS}(Z,w)<\rho\}, \label{eq-tube}
\end{equation} 
where $\dist_{FS}$ is the Fubini-Study distance, $Z$ is a projective-linear subspace
of $\cx\p^n$ and $\rho$ is a positive number. Using the explicit formula (see below, \eqref{eq-fubinistudy}) for the Fubini-Study
distance, one sees  that this set is a quadric domain $\Omega(A)$ (cf. \cite[\S 179]{cartanb}), with the special feature that the positive eigenvalues of $A$ are all equal to each other, and so are the negative eigenvalues. 
The set \eqref{eq-tube} is also a \emph{tube} in the 
sense of Weyl in $\cx\p^n$ and its volume can be computed as a function of $\rho$ by
a version of Weyl's tube formula in $\cx\p^n$ (see \cite[Theorem~7.20 and formula (7.58)]{gray}). This gives the volume as an integral over the projective-linear subspace $Z$. In the case when $Z$ is a projective hyperplane in $\cx\p^n$, further simplifications occur (see \cite[Theorem~7.22 and formula (7.62)]{gray}), and the integral disappears. However, we would like to emphasize here 
that 
quadric domains of the form \eqref{eq-tube} are  very special and most of them  cannot be  represented as \eqref{eq-tube}, and
further, the expressions for the volume
obtained from the complex projective tube formula
do not reveal the algebraically simple character of the representation of the volume in terms of the eigenvalues.

\subsection{Plan of this paper} The proof of the main result given here is elementary, and is essentially a brute-force computation of the integral that gives the volume of $\Omega(A)$. The computation is simplified by finding a recursion formula for the volume in terms of $p$ and $q$ (the number of positive and negative eigenvalues) and then showing that the right hand side of \eqref{eq-explicit} is the only solution of the recursion relation. We begin by looking at some useful alternative ways of writing our main formula \eqref{eq-explicit} in Section~\ref{sec-alternative} below. Then  we give a bird's eye view of the main arguments of the proof in Section~\ref{sec-outline}, postponing the technicalities to the last section (Section~\ref{sec-computations}). 

\subsection{Questions and future directions} As mentioned above, the proof of \eqref{eq-explicit} offered here is computational, and  it would  be very interesting to find a more conceptual proof.
Such a proof will make more use of the symmetry of the metric of $\cx\p^n$ than we are doing here, and will make precise why the characters of the product 
group $U(p)\times U(q)$ occur in the numerator \eqref{eq-Sdef} and the denominator \eqref{eq-Ddef} of \eqref{eq-explicit} in the way they do.
 Notice here that $U(p)\times U(q)$ acts as a group of
holomorphic congruences of $\Omega(A)$, i.e. isometries of the ambient space $\cx\p^n$ preserving the set $\Omega(A)$, so it is not surprising that 
the representation theory of this group plays a role in the formula \eqref{eq-explicit}.

There are also at least two directions
in which one can seek to generalize the formula. First, one can ask for analogous formulas in other homogeneous complex manifolds with a notion of 
a quadric, such as in the projective realization of complex hyperbolic space. One can also ask if the volumes of  certain tube domains around algebraic varieties in $\cx\p^n$ admit interesting formulas in the spirit of \eqref{eq-explicit}. We plan to pursue these questions and generalizations in
future publications. 
\subsection{Acknowledgements} The second-named author became aware of the existence of the interesting formula 
\eqref{eq-explicit} during collaboration with Andy Raich and Phil Harrington on a problem 
in several complex variables (see \cite{andypaper}), when he wanted to know the probability that the Levi form of a hypersurface in a random direction
at a non-pseudoconvex point is positive. He thanks Andy and Phil for their generosity and friendship throughout the years. He also thanks Ben Salisbury and Alperen Ergur for helpful discussions. 

\section{Alternative representations of $\s_{(p,q)}$ and $\D_{(p,q)}$}\label{sec-alternative}
\subsection{$\D_{(p,q)}$ in terms of Schur polynomials}It is a classical fact (see \cite[page 67]{macdonald}) that 
\begin{equation}
    \label{eq-dualcauchy}
     \D_{(p,q)}(x,y)= \sum_{\lambda\in \bb(p,q)}s_\lambda(x) s_{\lambda^*_{p,q}}(y).
\end{equation}
Since the Schur polynomials  have nonnegative 
integer coefficients (see \cite[Chapter 4]{egge}), thanks to the formula \eqref{eq-Sdef} for the numerator, it follows that the coefficient of a monomial in $\s_{(p,q)}$ is less than or equal to the coefficient of the same monomial in $\D_{(p,q)}$.
\subsection{$\s_{(p,q)}$ without Schur Polynomials} For a $p$-tuple $x=(x_1,\dots, x_p)$ and for any $1\leq \ell \leq p$, we define the $(p-1)$-tuple $x[\ell]$ as the one obtained 
from $x$ by removing its $\ell$-th entry:
    \begin{equation}\label{eq-y[k]}
        x[\ell]=\left(x_1,\cdots,x_{\ell-1},x_{\ell+1},\cdots,x_p\right). 
    \end{equation}
    In particular, $x[1]=(x_2,x_3,\cdots,x_p)$ and $x[p]=(x_1,x_2,\cdots,x_{p-1}).$

    \begin{prop}\label{prop-spqalternative}
    For non-negative integers $p,q$ we have
\begin{equation}
\s_{(p,q)}(x,y)
=\sum_{\ell=1}^p 
\frac{x_\ell^{p+q-1}}{\displaystyle\prod_{\substack{k=1\\k\ne\ell}}^p (x_\ell - x_k)}\cdot \D_{(p-1,q)}(x[\ell],y).\label{eq-Spq in terms of sum}
\end{equation}
\end{prop}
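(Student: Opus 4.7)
The plan is to use the bijection between $\cc(p,q)$ and $\bb(p-1,q)$ given by $\lambda=(q,\tilde\lambda_1,\dots,\tilde\lambda_{p-1})\leftrightarrow\tilde\lambda=(\lambda_2,\dots,\lambda_p)$, to expand each Schur polynomial $s_\lambda(x)$ along its ``heaviest'' column, and then to collapse the resulting sum over $\tilde\lambda$ using the dual Cauchy identity \eqref{eq-dualcauchy}.

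First I would verify a combinatorial lemma: if $\lambda=(q,\tilde\lambda)\in\cc(p,q)$, then $\lambda^*_{p,q}=\tilde\lambda^*_{p-1,q}$ as partitions. This is a short calculation using the definition $\lambda'_k=\#\{j:\lambda_j\geq k\}$: prepending a part equal to $q$ increases $\lambda'_k$ by $1$ for each $k\in\{1,\dots,q\}$, so that $(\lambda^*_{p,q})_j=p-\lambda'_{q-j+1}=(p-1)-\tilde\lambda'_{q-j+1}=(\tilde\lambda^*_{p-1,q})_j$.

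Next I would establish the Schur expansion
\begin{equation*}
s_{(q,\tilde\lambda)}(x_1,\dots,x_p)=\sum_{\ell=1}^{p}\frac{x_\ell^{p+q-1}}{\displaystyle\prod_{k\neq\ell}(x_\ell-x_k)}\,s_{\tilde\lambda}(x[\ell])
\end{equation*}
by expanding the bialternant $a_{\lambda+\delta_p}(x)$ in \eqref{eq-defalambdaplusdeltan} along its first column, whose entries are $x_i^{\lambda_1+p-1}=x_i^{p+q-1}$. The $(i,1)$-minor is precisely $a_{\tilde\lambda+\delta_{p-1}}(x[i])$ because subtracting $1$ from each of the remaining column exponents $\lambda_j+p-j$ for $j\geq 2$ yields $\tilde\lambda_{j-1}+(p-1)-(j-1)$. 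Dividing by the Vandermonde $a_{\delta_p}(x)$ and using the sign identity $a_{\delta_p}(x)=(-1)^{i-1}\prod_{k\neq i}(x_i-x_k)\cdot a_{\delta_{p-1}}(x[i])$ converts the cofactor expansion into the displayed formula; the alternating signs from the expansion cancel those from this rearrangement. This is the step most prone to sign errors and is the main technical obstacle, but it is essentially a book-keeping exercise.

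Finally, I would substitute into the definition \eqref{eq-Sdef}: using the bijection and the identity $\lambda^*_{p,q}=\tilde\lambda^*_{p-1,q}$, we rewrite
\begin{equation*}
\s_{(p,q)}(x,y)=\sum_{\tilde\lambda\in\bb(p-1,q)}s_{(q,\tilde\lambda)}(x)\,s_{\tilde\lambda^*_{p-1,q}}(y),
\end{equation*}
then insert the column-expansion of $s_{(q,\tilde\lambda)}(x)$, interchange the finite sums, and invoke the dual Cauchy identity \eqref{eq-dualcauchy} in the form $\sum_{\tilde\lambda\in\bb(p-1,q)}s_{\tilde\lambda}(x[\ell])\,s_{\tilde\lambda^*_{p-1,q}}(y)=\D_{(p-1,q)}(x[\ell],y)$ on the inner sum. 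The result is exactly \eqref{eq-Spq in terms of sum}.
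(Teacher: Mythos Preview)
Your proposal is correct and follows essentially the same approach as the paper: both arguments establish the bijection $\cc(p,q)\to\bb(p-1,q)$ together with the identity $\lambda^*_{p,q}=\tilde\lambda^*_{p-1,q}$ (this is the paper's Lemma~\ref{lem-combinatorial}), expand the bialternant $a_{\lambda+\delta_p}$ along the row/column carrying the exponent $p+q-1$, simplify the resulting Vandermonde ratio, and then collapse the inner sum over $\bb(p-1,q)$ via the dual Cauchy identity~\eqref{eq-dualcauchy}. The only cosmetic difference is that you work with the transpose of the paper's bialternant matrix (hence ``first column'' versus the paper's ``first row''), which of course does not affect the determinant.
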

We will need the following lemma:
\begin{lem}
    \label{lem-combinatorial}
    The map $\xi\mapsto \xi[1]$, $(\xi_1,\cdots,\xi_p)\mapsto (\xi_2,\dots, \xi_p)$
    \begin{enumerate}
        \item is a bijection from $\mathfrak{C}(p,q)$ onto  $\mathfrak{B}(p-1,q)$, and
        \item  For a partition $\xi\in \mathfrak{C}(p,q)$, we have
         $(\xi[1])^\ast_{p-1,q}=\xi^\ast_{p,q}$, with notation as in \eqref{eq-starpqdef}. 
    \end{enumerate}

    \end{lem}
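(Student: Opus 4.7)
The plan is to verify both statements by direct unpacking of the definitions, with the only real bookkeeping being a careful handling of indices for the conjugate partition.

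For part (1), first I would check that the map lands in the right set. If $\xi\in\cc(p,q)$, then $\xi=(q,\xi_2,\dots,\xi_p)$ with $q=\xi_1\geq \xi_2\geq\cdots\geq \xi_p\geq 0$. Dropping the first entry leaves a weakly decreasing tuple of length $p-1$ whose largest entry $\xi_2$ is at most $q$, so $\xi[1]\in\bb(p-1,q)$. The inverse is the obvious ``prepend $q$'' map: given $\eta=(\eta_1,\dots,\eta_{p-1})\in\bb(p-1,q)$, the tuple $(q,\eta_1,\dots,\eta_{p-1})$ is weakly decreasing (since $\eta_1\leq q$), hence a partition; its first part equals $q$, so it lies in $\cc(p,q)$. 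Since prepending and deleting are mutual inverses, this gives the bijection.

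For part (2), I would trace how conjugation interacts with removing the first row of the Ferrer's diagram. Using the definition $\lambda_k'=\#\{j:\lambda_j\geq k\}$, observe that for $\xi\in\cc(p,q)$ and any $k\in\{1,\dots,q\}$, the index $j=1$ contributes to the count in $\xi_k'$ because $\xi_1=q\geq k$; and for $j\geq 2$, $\xi_j\geq k$ iff $(\xi[1])_{j-1}\geq k$. Therefore
\begin{equation}
\xi_k' = (\xi[1])_k' + 1 \qquad (1\leq k\leq q). \notag
\end{equation}
Combining this with the definition \eqref{eq-starpqdef}, the $j$-th component of $\xi^*_{p,q}$ (for $1\leq j\leq q$) equals
\begin{equation}
(\xi^*_{p,q})_j = p - \xi'_{q-j+1} = p - \bigl((\xi[1])'_{q-j+1} + 1\bigr) = (p-1) - (\xi[1])'_{q-j+1} = \bigl((\xi[1])^*_{p-1,q}\bigr)_j, \notag
\end{equation}
which is exactly the claimed identity.

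The argument is essentially routine once the index shift $\xi_k'=(\xi[1])_k'+1$ is isolated; the only place where mild care is needed is the boundary case $q=0$, where $\cc(p,0)$ contains only the empty partition and both $\xi^*_{p,0}$ and $(\xi[1])^*_{p-1,0}$ are defined to be $0$, so the statement holds trivially. I do not anticipate any substantive obstacle.
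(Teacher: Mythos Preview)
Your proof is correct and follows essentially the same approach as the paper: both arguments establish the bijection via the obvious ``prepend $q$'' inverse (the paper separates this into injectivity and surjectivity, you phrase it as constructing the inverse directly), and both prove part (2) by first isolating the index shift $\xi_k'=(\xi[1])_k'+1$ and then reading off the equality of the starred partitions componentwise. Your explicit mention of the $q=0$ boundary case is a small addition not spelled out in the paper.
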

    \begin{proof}
\begin{enumerate}[wide]
\item 
    Let $\xi= (\xi_1, \xi_2, \dots, \xi_p)\in \mathfrak{C}(p,q)$, so that $\xi$ has at most $p$ parts and $\xi_1=q$ and $\xi_k\leq q$ for each $k$.
Then $\xi[1] = (\xi_2, \xi_3, \dots, \xi_p)$ has at most $p-1$ parts and each part is less than or equal to $q$, so $\xi[1]\in\mathfrak{B}(p-1, q)$. 

To show injectivity,  let $\xi,\eta\in \mathfrak{C}(p,q)$ be such that $\xi[1]=\eta[1]$,
       i.e. $(\xi_2,\cdots,\xi_p)=(\eta_2,\cdots,\eta_p)$.
        By definition of $\mathfrak{C}(p,q)$ we have $\xi_1=q=\eta_1,$ so we have $\xi=\eta,$
        completing the proof of injectivity.

To show surjectivity,  let $\eta\in \mathfrak{B}(p-1,q)$,
        and let $\xi=(q,\eta_1,\eta_2,\cdots,\eta_{p-1})$.
        Since $\eta$ has non-increasing parts with $\eta_i\le q,$ $\xi$ also has non-increasing parts and is a valid partition with at most $p$ parts and largest part equal to  $q$, so $\xi \in \mathfrak{C}(p,q).$ 
Since         \[ \xi[1]=(q,\eta_1,\eta_2,\cdots,\eta_{p-1})[1]=(\eta_1,\eta_2,\cdots,\eta_{p-1})=\eta,\]
        the mapping $\xi \mapsto \xi[1]$ is surjective.
\item 
        Let $\xi=(q, \xi_2,\cdots,\xi_p)\in \mathfrak{C}(p,q)$.
       We first note that $(\xi[1])'_j=\xi'_j-1$, for $1\le j\le q$, since
       \begin{align*}
           (\xi[1])'_j &=(\xi_2,\cdots,\xi_p)'_j=\#\ \{k:\xi_k\ge j, 2\le k\le p\}\\
           &=\#\ \{k:\xi_k\ge j, 1\le k\le p\}-1 \\& \text{ (since  $ \{k:\xi_k\ge j, 1\le k\le p\} =\{1\}\sqcup\{k:\xi_k\ge j, 2\le k\le p\}$ )}\\
           &=\xi_j'-1.
       \end{align*}
       Consequently \begin{align}
            (\xi[1])^\ast_{p-1,q}
              &=((p-1)-(\xi[1])'_q,\cdots,(p-1)-(\xi[1])'_1)\nonumber\\
              &=((p-1)-(\xi_q'-1),\cdots,(p-1)-(\xi_1'-1))\nonumber\\
              &=(p-\xi_q',\cdots,p-\xi_1')\nonumber\\
              &=\xi^\ast_{p,q}.\label{eq-conjugation relation}
       \end{align}
               \end{enumerate}

    \end{proof}

\begin{proof}[Proof of Proposition~\ref{prop-spqalternative} ]

Let $\xi \in \mathfrak{C}(p,q)$, where as in \eqref{eq-cdef},  $\mathfrak{C}(p,q) = \{\lambda \in \mathfrak{B}(p,q) : \lambda_1 = q\}$, so that 
$\xi_1 = q$ and $\xi$ has at most $p$ parts. From the determinant definition of $a_{\xi+\delta_p}$ in \eqref{eq-defalambdaplusdeltan}, we get by
expanding the determinant  by the first row:
\begin{align}
            a_{\xi+\delta_p}(x)=&\det \begin{pmatrix}
            x_1^{\xi_1+p-1} & x_2^{\xi_1+p-1} & \cdots & x_p^{\xi_1+p-1}\\
            x_1^{\xi_2+p-2} & x_2^{\xi_2+p-2} & \cdots & x_p^{\xi_2+p-2}\\
            \vdots          & \vdots          & \ddots & \vdots\\
            x_1^{\xi_p}     & x_2^{\xi_p}     & \cdots & x_p^{\xi_p}
        \end{pmatrix}=\sum\limits_{\ell=1}^p (-1)^{\ell-1} x_\ell^{p+\xi_1-1} a_{\xi[1]+\delta_{p-1}}(x[\ell])\nonumber\\ 
        &=\sum\limits_{\ell=1}^p (-1)^{\ell-1} x_\ell^{p+q-1} a_{\xi[1]+\delta_{p-1}}(x[\ell])\quad \text{(Since $\xi_1=q$)}.\label{eq-expansion of a xi plus deltan}
 \end{align}
Therefore
        \begin{align}
            \s_{(p,q)}(x,y)&= \sum_{\lambda\in \cc(p,q)}s_\lambda(x) s_{\lambda^*_{p,q}}(y) \quad \text{ using definition \eqref{eq-Sdef}}\nonumber \\
            &=\sum_{\xi\in \mathfrak{C}(p,q)}\frac{a_{\xi+\delta_p}(x)}{a_{\delta_p}(x)} \cdot s_{\xi_{p,q}^\ast}(y)\quad \text{ using definition of Schur polynomial \eqref{eq-defschurpoly} }\nonumber\\
            &=\frac{1}{a_{\delta_p}(x)}\sum_{\xi\in \mathfrak{C}(p,q)}a_{\xi+\delta_p}(x)\cdot s_{\xi_{p,q}^\ast}(y)\nonumber\\
           &= \frac{1}{a_{\delta_p}(x)}\sum_{\xi\in \mathfrak{C}(p,q)}\left[\sum\limits_{\ell=1}^p (-1)^{\ell-1} x_\ell^{p+q-1} a_{\xi[1]+\delta_{p-1}}(x[\ell])\right]\cdot s_{\xi_{p,q}^\ast}(y)\quad\text{ using \eqref{eq-expansion of a xi plus deltan})}\nonumber\\
           &= \frac{1}{a_{\delta_p}(x)} \sum_{\ell=1}^p (-1)^{\ell-1} x_\ell^{p+q-1} 
   \sum_{\xi \in \mathfrak{C}(p,q)} a_{\xi[1]+\delta_{p-1}}(x[\ell])\, s_{\xi_{p,q}^\ast}(y) \nonumber\\
&= \frac{1}{a_{\delta_p}(x)} \sum_{\ell=1}^p (-1)^{\ell-1} x_\ell^{p+q-1} 
   \sum_{\xi \in \mathfrak{C}(p,q)} s_{\xi[1]}(x[\ell])\, a_{\delta_{p-1}}(x[\ell])\, s_{\xi_{p,q}^\ast}(y),\nonumber \\
   & \text{by  definition of Schur polynomials \eqref{eq-defschurpoly}, $\quad a_{\xi[1]+\delta_{p-1}}(x[\ell]) = s_{\xi[1]}(x[\ell])\, a_{\delta_{p-1}}(x[\ell])$}.\nonumber\\
 &=  \frac{1}{a_{\delta_p}(x)}\sum\limits_{\ell=1}^p (-1)^{\ell-1} x_\ell^{p+q-1}a_{\delta_{p-1}}(x[\ell])\left[\sum_{\xi\in \mathfrak{C}(p,q)}s_{\xi[1]}(x[\ell])s_{\xi_{p,q}^\ast}(y)\right]. \label{eq-S(x,y)inter}
        \end{align}
Notice that 
\begin{align*}
    \sum_{\xi\in \mathfrak{C}(p,q)}s_{\xi[1]}(x[\ell])s_{\xi_{p,q}^\ast}(y)&=  \sum_{\xi\in \mathfrak{C}(p,q)}s_{\xi[1]}(x[\ell])s_{(\xi[1])_{p-1,q}^\ast}(y)\quad \text{using Part 2 of Lemma~\ref{lem-combinatorial}}\\
    &= \sum_{\xi[1]\in\mathfrak{B}(p-1,q)}s_{\xi[1]}(x[\ell])s_{(\xi[1])_{p-1,q}^\ast}(y) \quad \text{using Part 1 of Lemma~\ref{lem-combinatorial}}\\
    &=  \sum_{\lambda\in\mathfrak{B}(p-1,q)}s_{\lambda}(x[\ell])s_{\lambda_{p-1,q}^\ast}(y) =\D_{(p-1,q)}(x[\ell],y), \quad \text{ by \eqref{eq-dualcauchy}}
\end{align*}   and so    \begin{align}
    \eqref{eq-S(x,y)inter}
    &=\sum\limits_{\ell=1}^p (-1)^{\ell-1}x_\ell^{p+q-1}\frac{a_{\delta_{p-1}}(x[\ell])}{a_{\delta_p}(x)}\D_{(p-1,q)}(x[\ell],y)\nonumber\\
    &=\sum\limits_{\ell=1}^p (-1)^{\ell-1}x_\ell^{p+q-1}\frac{a_{\delta_{p-1}}(x[\ell])}{\prod\limits_{1\le i<j\le p}(x_i-x_j)}\D_{(p-1,q)}(x[\ell],y)\nonumber\\
    &=\sum\limits_{\ell=1}^p (-1)^{\ell-1}x_\ell^{p+q-1}\frac{a_{\delta_{p-1}}(x[\ell])}{\prod\limits_{\substack{1\le i<j\le p\\i\ne \ell\\j\ne\ell}}(x_i-x_j)\cdot\prod\limits_{i<\ell}(x_i-x_\ell)\cdot\prod\limits_{j>\ell}(x_\ell-x_j)}\D_{(p-1,q)}(x[\ell],y)\nonumber\\
    &=\sum\limits_{\ell=1}^p (-1)^{\ell-1}x_\ell^{p+q-1}\frac{a_{\delta_{p-1}}(x[\ell])}{a_{\delta_{p-1}}(x[\ell])\cdot (-1)^{\ell-1}\prod\limits_{\substack{k=1\\k\ne\ell}}^p(x_\ell-x_k)}\D_{(p-1,q)}(x[\ell],y)\nonumber\\
    &=\sum\limits_{\ell=1}^p (-1)^{\ell-1}x_\ell^{p+q-1}\frac{1}{(-1)^{\ell-1}\prod\limits_{\substack{k=1\\k\ne\ell}}^p(x_\ell-x_k)}\D_{(p-1,q)}(x[\ell],y)\nonumber\\
    &=\sum\limits_{\ell=1}^p \frac{x_\ell^{p+q-1}}{\prod\limits_{\substack{k=1\\k\ne\ell}}^p(x_\ell-x_k)}\D_{(p-1,q)}(x[\ell],y),\nonumber
\end{align}
completing the proof. 
\end{proof}

\section{Outline of the proof}\label{sec-outline}
\subsection{The metric and its volume form} Let us begin by recalling some formulas related to the Hermitian geometry of $\cx\p^n$ (for a detailed account, 
see \cite[pp. 159-161 and pp. 273-278]{kn2}).
For definiteness, in the rest of the paper, we will use a standard normalization of the Fubini-Study metric in which the holomorphic sectional 
curvature is 4. 
Then Fubini-Study distance between two points in $\cx\p^n$ is given by the formula
\begin{equation}
    \label{eq-fubinistudy}
    \dist_{FS}([z], [w])=\arccos \left(\frac{\abs{\ipr{z,w}}}{\norm{z}\cdot \norm{w}} \right), \quad z,w\in \cx^{n+1}\setminus \{0\},
\end{equation}
where $[z]$ denotes the one dimensional complex subspace of $\cx^{n+1}$ containing $z.$
The Fubini-Study distance 
is clearly invariant under the group of projectivized unitary automorphisms $PU(n+1)$ of $\cx\p^n$, i.e., maps of the form $[U]:\cx\p^n\to \cx\p^n$ given 
by $[U][z]=[Uz]$ where $U\in U(n+1)$, $z\in \cx^{n+1}\setminus \{0\}$.
It is well-known that \eqref{eq-fubinistudy} extends to a Kähler metric on $\cx\p^n$. A Kähler potential on the affine piece $\{z_{n+1}\not=0\}$ is given in terms of the inhomogeneous coordinates $\zeta_j =\dfrac{z_j}{z_{n+1}}, 1\leq j \leq n$ by $\varphi =\log(1+\abs{\zeta}^2)$, where  $\lvert\zeta\rvert^2=\lvert\zeta_1\rvert^2+\cdots+\lvert\zeta_n\rvert^2$. From this, one deduces
immediately formulas for the Kähler form $\omega =i\partial \dbar \varphi,$ as well as the volume form $\dfrac{\omega^{\wedge n}}{n!}$, which a 
computation shows is equal to  ${(1+\lvert\zeta\rvert^2)^{-(n+1)}}{dV(\zeta)}$, where $dV(\zeta)$ denotes the volume form of complex Euclidean space. Since
the hypersurface $\{z_{n+1}=0\}$ in $\cx\p^n$ has measure zero, it follows that 
for a measurable set $U\subset \cx\p^n$, we have the formula 
 \begin{equation}
        \volfs(U)=\int\limits\limits_{\sigma^{-1}(U)}\frac{dV(\zeta)}{(1+\lvert\zeta\rvert^2)^{n+1}}\label{eq-volfsdef}
        \end{equation}
        for the Fubini-Study volume,
        where $\sigma:\cx^n\to \mathbb {CP}^n$ is the parametrization of the affine piece $\{z_{n+1}\not=0\}$ given by $\sigma(\zeta)=[(\zeta_1,\dots,\zeta_n,1)]$. The Riemannian metric and its volume form are also invariant under the group $PU(n+1)$. 

        Our goal therefore is to compute the integral \eqref{eq-volfsdef} when $U=\Omega(A)$ in terms of the eigenvalues of $A$.
        Notice that when $A$ is positive definite (i.e. all eigenvalues of $A$ are strictly positive) then $\Omega(A)=\cx\p^n$. 
    In this case it is well known that 
    \begin{equation}\label{eq-cpnvol}
    \volfs(\mathbb{CP}^n)=\frac{\pi^n}{n!}.
    \end{equation}
   For $n=0$,  when $\mathbb{CP}^0$ is a singleton,   we let $  \volfs(\mathbb{CP}^n)=1$, so that \eqref{eq-cpnvol} still holds. 

\subsection{Reduction to nondegenerate diagonal form } Let $A$ be as in the main theorem of this paper, i.e., $A$ is a Hermitian square matrix of size 
$n+1$ where $n=p+q+r-1$, with $p=$ number of positive eigenvalues, $q=$ number of negative eigenvalues and $r=$ number of zero eigenvalues. Recall that 
the positive eigenvalues are denoted by $\mu_j, 1\leq j\leq p$ and the negative eigenvalues are denoted by $\nu_k, 1\leq k \leq q$. 
Thanks to the spectral theorem, there is a unitary matrix $U$ such that $U^{-1}AU=\Delta,$ where 
\begin{equation}
    \label{eq-Delta}\Delta=\diag(\mu, -\nu,0)=
\diag(\mu_1,\dots, \mu_p, -\nu_1,\dots, -\nu_q, \underbrace{0,\dots,0}_r).
\end{equation}
Then we have $\Omega(A)=[U](\Omega(\Delta))$ where
$[U]:\cx\p^n\to \cx\p^n$ is the projectivized unitary map $[z]\mapsto [Uz]$ induced by the unitary automorphism $U$. Therefore
\begin{equation}
    \label{eq-diagonal}
    \volfs(\Omega(A))=\volfs([U](\Omega(\Delta))=\volfs(\Omega(\Delta)),
\end{equation}
since the Fubini-Study metric (and therefore its Riemannian volume form) is invariant under projectivized unitary maps. To compute
$\volfs(\Omega(\Delta))$, let $D$ be the diagonal matrix of size $p+q$ given by
\begin{equation}
    \label{eq-matrixD}
    D=\diag(\mu,-\nu)= (\mu_1,\dots, \mu_p, -\nu_1,\dots, -\nu_q).
\end{equation}
We will prove the following:
\begin{prop} \label{prop-reduction} We have
\begin{equation}
    \label{eq-DeltatoD}
    \volfs(\Omega(\Delta))=\frac{\pi^r}{n(n-1)\dots (n-r+1)}\volfs(\Omega(D)),
\end{equation}
    where on the left we have $2n=2(p+q+r-1)$ real-dimensional volume and on the right $2(p+q-1)$ real-dimensional volume.
\end{prop}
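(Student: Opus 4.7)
The plan is to reduce the $2n$-dimensional integral defining $\volfs(\Omega(\Delta))$ via \eqref{eq-volfsdef} to the $2(p+q-1)$-dimensional integral defining $\volfs(\Omega(D))$, by integrating out the $r$ ``free'' complex directions coming from the zero eigenvalues of $\Delta$. To achieve this in one step, I would first switch from the standard chart $\sigma$ on $\cx\p^n$ to the modified chart $\tilde{\sigma}\colon\cx^{p+q-1}\times\cx^r\to\cx\p^n$ defined by
\[
\tilde{\sigma}(\eta,w)=[\eta_1:\cdots:\eta_{p+q-1}:1:w_1:\cdots:w_r],
\]
which places the normalizing ``$1$'' at position $p+q$, where $\Delta$ has the nonzero eigenvalue $-\nu_q$. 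Since $\tilde{\sigma}$ differs from $\sigma$ only by composition with a coordinate-permuting element of $PU(n+1)$, and the Fubini--Study volume form is $PU(n+1)$-invariant, the pullback of the volume form under $\tilde{\sigma}$ is again $(1+|\eta|^2+|w|^2)^{-(n+1)}\,dV(\eta)\,dV(w)$.

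For $z=\tilde{\sigma}(\eta,w)$, the last $r$ entries of $z$ meet the zero diagonal entries of $\Delta$ and drop out of the quadratic form, so
\[
\ipr{\Delta z,z}=\ipr{D(\eta,1),(\eta,1)},
\]
which is independent of $w$. Hence $\tilde{\sigma}^{-1}(\Omega(\Delta))=U\times\cx^r$, where $U\subset\cx^{p+q-1}$ is the preimage of $\Omega(D)$ under the analogous standard chart on $\cx\p^{p+q-1}$ used to evaluate $\volfs(\Omega(D))$ by \eqref{eq-volfsdef}. Fubini's theorem then lets me integrate out $w$ first: with $c=1+|\eta|^2$,
\[
\int_{\cx^r}\frac{dV(w)}{(c+|w|^2)^{n+1}}=\pi^r\cdot\frac{(p+q-1)!}{n!}\cdot\frac{1}{c^{p+q}},
\]
a routine Beta-function computation in polar coordinates on $\cx^r$.

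Substituting back and recognizing the remaining $\eta$-integral as $\volfs(\Omega(D))$ yields
\[
\volfs(\Omega(\Delta))=\frac{\pi^r(p+q-1)!}{n!}\,\volfs(\Omega(D))=\frac{\pi^r}{n(n-1)\cdots(n-r+1)}\,\volfs(\Omega(D)),
\]
using $p+q=n-r+1$. The only step needing a moment's thought is the chart change in the first paragraph, but this follows immediately from $PU(n+1)$-invariance applied to a permutation matrix. All other steps are elementary and would constitute the routine bulk of the proof.
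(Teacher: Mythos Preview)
Your proof is correct and follows the same idea as the paper's—integrate out the $r$ directions corresponding to the zero eigenvalues of $\Delta$. The paper does this via Lemma~\ref{lemma-integral} on the chart $\{z_1\neq 0\}$, passing to real coordinates $t_j=|\xi_j|^2$ and then integrating out $t_{p+q},\dots,t_n$ one at a time to produce the factor $1/\bigl(n(n-1)\cdots(n-r+1)\bigr)$; your version works on the chart $\{z_{p+q}\neq 0\}$ in complex coordinates and handles all of $\cx^r$ in a single Beta-integral, but the difference is purely cosmetic.
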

We need an  integral representation of the volume.  Denote in the sequel by $\rl_+$ the set of positive real numbers, so that $\rl_+^n$ denotes the orthant in $\rl^n$ consisting of points with all coordinates strictly positive.   For $\gamma=(\gamma_1,\cdots,\gamma_{P})\in \mathbb{R}_+^{P}$ and $\delta=(\delta_1,\cdots,\delta_Q)\in \mathbb{R}_+^Q$ we define the set $E_{P,Q}(\gamma, \delta)\subset \mathbb{R}_+^{P+Q}$ as:
\begin{equation}
    E_{P,Q}(\gamma, \delta)=\left\{ (t_1, \dots, t_{P+Q}) \in \mathbb R^{P+Q}_+ : \sum_{j=1}^{P} \gamma_j t_j - \sum_{k=1}^{Q} \delta_k t_{P+k} > -1 \right\}.\label{eq-defsetE}
\end{equation}
\begin{lem}\label{lemma-integral}
  Let $p\ge 1, q,r\geq 0$, $x\in \rl_+^p, y\in \rl_+^q$,  and let 
  \[ \Delta=\diag(x_1,\cdots,x_p,-y_1,\cdots,-y_q, \underbrace{0, \dots, 0}_r) \]
be an $(n+1)\times (n+1)$ diagonal matrix, where let $n=p+q+r-1$. Let \begin{align}
 a &= \frac{1}{x_1}\cdot x[1]=\left(\frac{x_2}{x_1},\cdots,\frac{x_p}{x_1}\right)\in\mathbb{R}^{p-1}_+, \label{eq-deftuple"a"} \\
 \intertext{where $x[1]=(x_2,\cdots,x_p)\in \mathbb{R}_+^{p-1}$ is as in \eqref{eq-y[k]} and}\nonumber\\
 b &=\frac{1}{x_1}\cdot y =\left(\frac{y_1}{x_1},\cdots,\frac{y_q}{x_1}\right)\in\mathbb{R}^{q}_+\label{eq-deftuple"b"}.
\end{align}
Denoting $t'=(t_1,\dots, t_{p+q-1}),t''=(t_{p+q},\dots, t_n)$ we have
\begin{align}
       \volfs(\Omega(\Delta))=\pi^{n} \int\limits_{ E_{p-1,q}(a,b)}\int\limits_{\rl^r_+}  \frac{dV(t'')dV(t')}{\left(1 + \sum\limits_{j=1}^{n} t_j \right)^{n+1}},\label{eq-V(p,q)asintegral}
   \end{align} 
   where $E_{p-1,q}(a,b)$ as in \eqref{eq-defsetE}. 
\end{lem}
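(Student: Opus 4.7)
The plan is a direct computation from the integral representation \eqref{eq-volfsdef}, after replacing the affine chart built into that formula by one adapted to the positive eigenvalue $x_1$.

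First I would invoke invariance of the Fubini-Study volume under the projectivized unitary coordinate permutation that transposes $z_1$ and $z_{n+1}$ (the same argument used in \eqref{eq-diagonal}). This replaces $\Delta$ by the unitarily conjugate diagonal matrix
\[
\Delta'=\diag(x_2,\ldots,x_p,-y_1,\ldots,-y_q,0,\ldots,0,x_1),
\]
whose last diagonal entry is the positive number $x_1$. Equivalently, one can simply redo the Kähler-potential computation that produced \eqref{eq-volfsdef} for the alternate affine chart $\zeta\mapsto[(1,\zeta_1,\ldots,\zeta_n)]$; either way we may assume the coordinate set equal to $1$ is paired with $x_1$.

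Second, I would expand the defining inequality of $\Omega(\Delta')$ in this chart. With $z=(1,\zeta_1,\ldots,\zeta_n)$ and the diagonal entries of $\Delta'$ matched up with $(1,\zeta_1,\ldots,\zeta_n)$ in order, the condition $\ipr{\Delta' z,z}>0$ becomes
\[
x_1+\sum_{j=2}^p x_j\lvert\zeta_{j-1}\rvert^2-\sum_{k=1}^q y_k\lvert\zeta_{p+k-1}\rvert^2>0,
\]
with the $r$ coordinates $\zeta_{p+q},\ldots,\zeta_n$ (attached to the zero eigenvalues) entirely unconstrained. Writing $t_m=\lvert\zeta_m\rvert^2$ and dividing by $x_1>0$, this becomes
\[
\sum_{j=1}^{p-1}a_j t_j-\sum_{k=1}^q b_k t_{p-1+k}>-1,
\]
which is precisely the defining inequality of $E_{p-1,q}(a,b)$ with $a,b$ as in \eqref{eq-deftuple"a"}--\eqref{eq-deftuple"b"}.

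Third, I would pass from $\zeta\in\cx^n$ to $t\in\rl_+^n$ by polar coordinates $\zeta_m=r_m e^{i\theta_m}$ in each complex factor, so that $dV(\zeta_m)=\tfrac12\,dt_m\,d\theta_m$ with $t_m=r_m^2$. Since both the integrand $(1+\lvert\zeta\rvert^2)^{-(n+1)}=(1+\sum_m t_m)^{-(n+1)}$ and the defining inequality depend only on the moduli, each angular integration contributes $2\pi$, and the $n$ Jacobian factors $\tfrac12$ combine with the $n$ factors of $2\pi$ to yield the overall constant $\pi^n$. Splitting $t=(t',t'')$ with $t'$ subject to the inequality above and $t''\in\rl_+^r$ free produces exactly \eqref{eq-V(p,q)asintegral}.

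The only real obstacle, and it is a minor bookkeeping one, is the change of affine chart in the first step: formula \eqref{eq-volfsdef} is written for the chart $\{z_{n+1}\ne 0\}$, but a clean reduction to $E_{p-1,q}(a,b)$ requires the coordinate normalized to $1$ to correspond to the positive entry $x_1$. Handling this either by the unitary-permutation conjugation above or by rederiving the Kähler-volume formula in the chart $\{z_1\ne 0\}$ is the step that must be stated carefully; the rest is routine calculation.
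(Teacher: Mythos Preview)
Your proposal is correct and is essentially the paper's own argument: the paper works directly in the affine chart $U_1=\{z_1\neq 0\}$ with coordinates $\xi_j=z_{j+1}/z_1$ (your second alternative), observes that $\Omega(\Delta)\cap U_1$ is described by exactly the inequality you derive, and then performs the same polar-coordinate change $\xi_j=\sqrt{t_j}e^{i\theta_j}$ to produce the factor $\pi^n$ and the product region $E_{p-1,q}(a,b)\times\rl_+^r\times[0,2\pi)^n$. The only cosmetic difference is that the paper does not phrase the chart change as a unitary permutation; it simply uses $U_1$ from the outset.
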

  \begin{proof}
  On the affine chart $U_1=\{[z]\in \cx\p^n:z_1\not=0\}$, introduce inhomogeneous coordinates $\xi_j=\dfrac{z_{j+1}}{z_1}, 1\leq j \leq n$. In these coordinates
  \begin{equation}
      \label{eq-tauinv}\Omega(\Delta)\cap U_1= \left\{\xi\in \cx^{n}: \sum_{j=1}^{p-1} a_j |\xi_j|^2 - \sum_{k=1}^{q} b_k |\xi_{p+k-1}|^2 > -1 \right\}.
  \end{equation}
Map $\cx^n$ to $\rl^+_n\times [0,2\pi)$ by setting $\xi_j=\sqrt{t_j}e^{i\theta_j}$. Notice that then the volume elements are related by 
\begin{equation}\label{eq-jacobian}
dV(\xi)=\dfrac{1}{2^n}dV(t)dV(\theta),
\end{equation}
where as usual $dV$ denotes the volume element of the Lebesgue measure (of an appropriate dimension). In the coordinates $(t_1,\dots, t_n, \theta_1,\dots, \theta_n)$, we have
\begin{align}
   \Omega(\Delta)\cap U_1&= \left\{\sum_{j=1}^{p-1} a_j t_j - \sum_{k=1}^{q} b_k t_{p+k-1} > -1, 0\leq \theta_j <2\pi \text{ for } 1\leq j \leq n\right\}\label{eq-tauinv2}\\
   &= \{(t',t'',\theta)\in \rl_+^{p+q-1}\times \rl_+^r\times [0,2\pi): t'\in E_{p-1,q}(a,b)\}\nonumber\\
   &=  E_{p-1,q}(a,b)\times \rl_+^r \times [0,2\pi)^n.\label{eq-tauinv3}
\end{align}
  Therefore, from  \eqref{eq-volfsdef}, the Fubini-Study volume of $\Omega(\Delta)$ is given by

\begin{align*}
    \volfs(\Omega(\Delta)) &= \int\limits_{\eqref{eq-tauinv}}\frac{dV(\xi)}{\left(1+ \sum_{j=1}^n\abs{\xi_j}^2\right)^{n+1}},\nonumber\\
    &= \frac{1}{2^n} \int_{\eqref{eq-tauinv2}}\frac{dV(t)dV(\theta)}{\left(1+ \sum_{j=1}^n t_j\right)^{n+1}} \quad \text{ using \eqref{eq-tauinv2} and \eqref{eq-jacobian}} \nonumber\\
    &= \frac{1}{2^n}\int_{[0,2\pi)^{n}}dV(\theta) \int\limits_{E_{p-1,q}(a,b)}\int\limits_{\rl^r_+}  \frac{dV(t'')dV(t')}{\left(1 + \sum\limits_{j=1}^{n} t_j \right)^{n+1}}\quad\text{ using \eqref{eq-tauinv3}},\\
    &= \pi^n \int\limits_{E_{p-1,q}(a,b)}\int\limits_{\rl^r_+}  \frac{dV(t'')dV(t')}{\left(1 + \sum\limits_{j=1}^{n} t_j \right)^{n+1}}, 
\end{align*}
completing the proof. 
  
\end{proof}
\begin{proof}[Proof of Proposition~\ref{prop-reduction}]
    Recalling that $t''=(t_{p+q},\dots, t_n)$, we have 
    \begin{align*}
       \int\limits_{\rl^r_+}\frac{dV(t'')}{\left(1 + \sum\limits_{j=1}^{n} t_j \right)^{n+1}}&=  \int\limits_{\rl_+^{r-1}}\left(\int_{t_n=0}^\infty{\left(1 + \sum\limits_{j=1}^{n-1} t_j +t_n\right)^{-(n+1)}}dt_n\right)dt_{p+q}dt_{p+q+1}\dots dt_{n-1}\\
       &=  \int\limits_{\rl_+^{r-1}}\left.\left(\frac{-1}{n}\cdot{\left(1 + \sum\limits_{j=1}^{n-1} t_j +t_n\right)^{-n}}\right\vert_{t_n=0}^\infty \right)dt_{p+q}dt_{p+q+1}\dots dt_{n-1}\\
       &= \frac{1}{n}\int\limits_{\rl_+^{r-1}} \left(1 + \sum\limits_{j=1}^{n-1} t_j\right)^{-n} dt_{p+q}dt_{p+q+1}\dots dt_{n-1}.
        \end{align*}
        Iterating this $r$ times we see that
        \begin{equation}
            \label{eq-r-iterate}
            \int\limits_{\rl^r_+}\frac{dV(t'')}{\left(1 + \sum\limits_{j=1}^{n} t_j \right)^{n+1}}= \frac{1}{n(n-1)\cdot \cdots \cdot(n-r+1)} \left(1 + \sum\limits_{j=1}^{p+q-1} t_j\right)^{-(p+q)}. 
        \end{equation}
        Therefore, by  Lemma~\ref{lemma-integral},
        \begin{align*}
            \volfs(\Omega(\Delta))&= \pi^r\cdot \pi^{n-r}  \int\limits_{E_{p-1,q}(a,b)}\left(\int\limits_{\rl^r_+}  {\left(1 + \sum\limits_{j=1}^{n} t_j \right)^{-(n+1)}}dV(t'')\right)dV(t')\\
            &= \frac{\pi^r}{n(n-1)\cdot \cdots \cdot(n-r+1)}\cdot \pi^{p+q-1}\int\limits_{E_{p-1,q}(a,b)} \left(1 + \sum\limits_{j=1}^{p+q-1} t_j\right)^{-(p+q)}dV(t')\\
            &=\frac{\pi^r}{n(n-1)\cdot \cdots \cdot(n-r+1)}\cdot \volfs(\Omega(D)),
        \end{align*}
        by Lemma~\ref{lemma-integral} again.
\end{proof}
\subsection{Preliminaries for the double induction}
In view of Proposition~\ref{prop-reduction}, it suffices to compute the volume $\volfs(\Omega(D))$ when $D$ has no zero eigenvalues. This will be accomplished
by a double induction on $p$ and $q$, the number of positive and negative eigenvalues. We introduce the following setup to efficiently handle 
the induction hypotheses as $p,q$ both vary.

Let $\mathbb{N}$ be the set of nonnegative integers, and recall that  $\mathbb{R}_+ = \{x \in \mathbb{R} : x>0\}.$
We introduce a collection $\mathscr{N}$ of double sequences of functions indexed by $\mathbb{N}^2$: $\mathscr{N}$ consists of double sequences $
    N = \{ N_{(p,q)} : (p,q) \in \mathbb{N}^2 \}$, where each $N_{(p,q)}$ is a continuous function
\[
    N_{(p,q)} : \mathbb{R}_{+}^{p} \times \mathbb{R}_{+}^{q} \longrightarrow \mathbb{R}_{+},
\] 
such that the function $(x,y)\mapsto N_{(p,q)}(x,y)$, where $x\in\mathbb{R}_{+}^{p}$ and $y\in\mathbb{R}_{+}^{q}$, is symmetric in the variables $x_1,\cdots,x_p$ and also symmetric in the variables $y_1,\cdots,y_q$, where $x=(x_1,\cdots,x_p)$ and $y=(y_1,\cdots,y_q)$, 
and satisfies the following properties:
\begin{itemize}[wide]
    \item \label{prop:N1} 
    \textbf{(Base Condition)} for all $x\in \mathbb{R}_{+}$,
    \begin{align}
    N_{(0,0)}(\cdot, \cdot)= 0 \quad \text{ and }
N_{(1,0)}(x, \cdot) &= 1.\quad \label{eq:base conditions}
\end{align}
  \item \textbf{(Duality)}  for all $(p,q) \in \mathbb{N}^2\setminus\{(0,0)\}$, and for all $x\in\mathbb{R}_{+}^p$ and $y\in\mathbb{R}_{+}^q$,
    \begin{equation}
        N_{(p,q)}(x,y)
        = \D_{(p,q)}(x,y) - N_{(q,p)}(y,x),
        \label{eq-npq duality relation}
    \end{equation}
    where $\D_{(p,q)}(x,y)$ is as in \eqref{eq-Ddef}.

    \item \label{prop:N3}
    \textbf{(Recursion)} for all $q\ge 1$, and for all $x\in\mathbb{R}_{+}^p$ and $y\in\mathbb{R}_{+}^q$ with  $y_1<y_2<\cdots<y_q$, 
    \begin{align}
        N_{(p,q)}(x,y)
        &= \left( \prod_{j=1}^{p} (x_j + y_q) \right)\cdot
           N_{(p,q-1)}(x, y[q])
           - \T_{(p,q)}(x,y)\cdot
             N_{(p,q-1)}(\alpha(x,y), \beta(y)),
        \label{eq-npq recursion}
    \end{align}
          
    with
    \begin{equation}
    \T_{(p,q)}(x,y)
        = \frac{1}{y_q^{pq - 2p - q + 1}}\cdot\prod\limits_{j=1}^{p} (y_q + x_j)^{q-1}
            \prod\limits_{k=1}^{q-1} (y_q - y_k)^{p-1},
        \label{eq-tpq def}
    \end{equation}
    \begin{align}
        \alpha(x,y) &= (\alpha_1, \dots, \alpha_p),
        &
        \alpha_j &= \frac{x_j}{y_q + x_j}, \quad 1 \le j \le p,\label{eq-defalpha}\\
         \beta(y) &= (\beta_1, \dots, \beta_{q-1}),
        &
        \beta_k &= \frac{y_k}{y_q - y_k}, \quad 1 \le k \le q-1.
        \label{eq-defbeta}
`    \end{align}
\label{prop:N4}
  
\end{itemize}
 
 With $\mathscr{N}$ as above,  will prove the following three propositions that will lead to the determination of $\volfs(\Omega(D))$ and the proof of the main theorem:
\begin{propa}\label{prop: uniquesness of N}
    The family $\mathscr{N}$ contains at most one element.
\end{propa}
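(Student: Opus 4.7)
The plan is to show that any $N\in\mathscr{N}$ is pinned down function-by-function by the three defining properties, via a nested induction on the index pair $(p,q)$. The base condition fixes $N_{(0,0)}$, the recursion \eqref{eq-npq recursion} expresses $N_{(p,q)}$ in terms of $N_{(p,q-1)}$ whenever $q\ge 1$, and the duality \eqref{eq-npq duality relation} lets us cross between the $q=0$ and $p=0$ slices of the index set $\mathbb{N}^2$. Together these ingredients let us reach every $(p,q)$ starting from $N_{(0,0)}=0$.

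First I would prove, by induction on $q$, that $N_{(0,q)}$ is uniquely determined for every $q\ge 0$. The base case $q=0$ is immediate from \eqref{eq:base conditions}. For the inductive step, specialize the recursion \eqref{eq-npq recursion} to $p=0$: both products over $j$ become empty, and the right-hand side is a completely explicit expression involving only $N_{(0,q-1)}$ evaluated at two derived tuples built from $y$, together with the rational factor $\T_{(0,q)}$. This determines $N_{(0,q)}(\cdot,y)$ on the open set $\{y\in\mathbb{R}_+^q : y_1<\cdots<y_q\}$. Since $N_{(0,q)}$ is symmetric in $y$, it is determined on the dense open set of $y\in\mathbb{R}_+^q$ with pairwise distinct entries (one simply permutes the coordinates into strictly increasing order and invokes the recursion), hence on all of $\mathbb{R}_+^q$ by continuity.

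Next, for each $p\ge 1$, I would prove by induction on $q$ that $N_{(p,q)}$ is uniquely determined. For $q=0$, apply the duality relation \eqref{eq-npq duality relation}, which is valid as $(p,0)\ne(0,0)$: it reads
\[
N_{(p,0)}(x,\cdot)=\D_{(p,0)}(x,\cdot)-N_{(0,p)}(\cdot,x),
\]
and by the first stage $N_{(0,p)}$ is already pinned down, so $N_{(p,0)}$ is too. As a sanity check, $p=1$ gives $N_{(1,0)}=1-N_{(0,1)}=1-0=1$, consistent with the base condition. For $q\ge 1$, the recursion again expresses $N_{(p,q)}$ on the locus $\{y_1<\cdots<y_q\}$ in terms of $N_{(p,q-1)}$ evaluated at $(x,y[q])$ and $(\alpha(x,y),\beta(y))$; these are determined by the inductive hypothesis, and symmetry plus continuity extend the uniqueness to all of $\mathbb{R}_+^p\times\mathbb{R}_+^q$.

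The only mildly delicate point is the passage from the open locus $\{y_1<\cdots<y_q\}\subset\mathbb{R}_+^q$, where the recursion is literally stated, to a general point of $\mathbb{R}_+^q$. I expect this to be the one step requiring care, but it is handled uniformly by the built-in symmetry of each $N_{(p,q)}$ in the $y$-variables together with continuity: the set of $y$ with all coordinates distinct is open and dense, and any such point lies in the orbit of a strictly increasing tuple under a coordinate permutation. The rest of the argument is straightforward bookkeeping of the induction --- downward in $q$ via the recursion to reach $q=0$, across via duality from $N_{(0,p)}$ to $N_{(p,0)}$, and then upward in $q$ again via the recursion.
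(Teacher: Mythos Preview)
Your proposal is correct and follows essentially the same approach as the paper: a double induction using the recursion to climb in $q$, the duality relation to pass between the $p=0$ and $q=0$ slices, and symmetry plus continuity to extend from the locus of strictly increasing $y$-tuples to all of $\mathbb{R}_+^q$. The only minor difference is that the paper starts the $(0,q)$-induction at $q=1$ via duality from the base condition $N_{(1,0)}=1$, whereas you start at $q=0$ and obtain $N_{(0,1)}$ directly from the recursion applied to $N_{(0,0)}=0$; both routes are valid.
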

\begin{propb}\label{Prop: N well defined and belongs to the set}
    For each $(p,q)\in \mathbb{N}^2\setminus\{(0,0)\}$, $x\in \mathbb{R}_{+}^{p}$ and $y\in\mathbb{R}_{+}^{q}$, let
     \begin{align}
        \n_{(p,q)}(x,y)=\frac{(p+q-1)!}{\pi^{p+q-1}}\cdot \D_{(p,q)}(x,y)\cdot \volfs(\Omega(\diag(x,-y)),\label{eq-npq def}
    \end{align}
    where $\diag(x,-y)=\diag(x_1,\cdots,x_p, -y_1,\cdots,-y_q)$ is a diagonal matrix of size $(p+q).$ If we set $ \n_{(0,0)}(\cdot, \cdot) = 0$,
 then the double sequence $\n=\{\n_{(p,q)}:(p,q)\in\mathbb{N}^2\}$  belongs to $\mathscr{N}$.
\end{propb}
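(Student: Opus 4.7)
The plan is to verify that the double sequence $\n$ defined by \eqref{eq-npq def} satisfies each defining property of $\mathscr{N}$: continuity and nonnegativity, separate symmetry in the $x$- and $y$-variables, the base conditions \eqref{eq:base conditions}, the duality \eqref{eq-npq duality relation}, and the recursion \eqref{eq-npq recursion}. The first four are short. Continuity of $\n_{(p,q)}$ follows from the integral representation \eqref{eq-V(p,q)asintegral} specialized to $r=0$ together with dominated convergence, since the defining set $E_{p-1,q}(a,b)$ varies continuously with $(a,b)$ and the integrand is locally uniformly bounded. Symmetry in each block follows because permuting the positive (resp.\ negative) eigenvalues of $\diag(x,-y)$ is realized by a unitary coordinate permutation, under which the Fubini--Study volume is invariant; combined with the manifest symmetry of $\D_{(p,q)}$ this yields the claim. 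Nonnegativity is immediate. For the base case, $\n_{(0,0)}=0$ by stipulation, and since $\Omega(\diag(x_1))=\cx\p^{0}$ has volume $1$ while $\D_{(1,0)}\equiv 1$, we get $\n_{(1,0)}(x_1)=1$. For the duality, I use that $\Omega(A)$ and $\Omega(-A)$ partition $\cx\p^{p+q-1}$ modulo the measure-zero hypersurface $\{\ipr{Az,z}=0\}$, and that a coordinate permutation identifies $\Omega(\diag(-x,y))$ with $\Omega(\diag(y,-x))$; multiplying the resulting volume identity by $\frac{(p+q-1)!}{\pi^{p+q-1}}\D_{(p,q)}(x,y)=\frac{(p+q-1)!}{\pi^{p+q-1}}\D_{(q,p)}(y,x)$ then yields \eqref{eq-npq duality relation}.

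The main work is the recursion. My plan is to compute $\volfs(\Omega(\diag(x,-y)))$ from the chart $\{z_{p+q-1}\neq 0\}$, centered at the coordinate corresponding to $-y_{q-1}$. After the polar substitution of Lemma~\ref{lemma-integral}, the real variables split as $(t,u,w)\in\rl_+^p\times\rl_+^{q-2}\times\rl_+$, and the defining inequality of $\Omega(\diag(x,-y))$ becomes $0<w<M$ with $M=(\sum_j x_j t_j - y_{q-1}-\sum_{k<q-1} y_k u_k)/y_q$. Writing $\tau=1+\sum_j t_j+\sum_k u_k$ and performing the elementary $w$-integration on $[0,M]$ gives
\[
\volfs(\Omega(\diag(x,-y)))=\frac{\pi^{p+q-1}}{p+q-1}\int_R\Bigl[\tau^{-(p+q-1)}-(\tau+M)^{-(p+q-1)}\Bigr]\,dV(t,u),
\]
where $R=\{(t,u):M>0\}$. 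The first summand, integrated over $R$, equals $\volfs(\Omega(\diag(x,-y[q])))/\pi^{p+q-2}$ by applying Lemma~\ref{lemma-integral} to the smaller quadric in its own chart $\{z_{p+q-1}\neq 0\}$; after multiplication by the prefactor in \eqref{eq-npq def} and use of $\D_{(p,q)}=\prod_j(x_j+y_q)\,\D_{(p,q-1)}(x,y[q])$, this contributes precisely $\prod_j(x_j+y_q)\,\n_{(p,q-1)}(x,y[q])$, the first term on the right of \eqref{eq-npq recursion}.

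For the second summand I will apply the substitution $s_j=(y_q+x_j)t_j/(y_q-y_{q-1})$ for $1\le j\le p$ and $s_{p+k}=(y_q-y_k)u_k/(y_q-y_{q-1})$ for $1\le k\le q-2$, which is a diffeomorphism of the open orthant under the hypothesis $y_1<\cdots<y_q$. A direct computation gives $\tau+M=\frac{y_q-y_{q-1}}{y_q}(1+\sum_i s_i)$ and converts the constraint $M>0$ into $\sum_j\alpha_j s_j-\sum_{k<q-1}\beta_k s_{p+k}>\beta_{q-1}$, which is exactly the defining inequality for $\Omega(\diag(\alpha,-\beta))\subset\cx\p^{p+q-2}$ expressed in the chart centered at its last negative coordinate. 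Tracking the Jacobian rewrites the second summand as a scalar multiple of $\volfs(\Omega(\diag(\alpha,-\beta)))$, and after multiplying by the prefactor in \eqref{eq-npq def} the identity reduces to the algebraic equality
\[
\frac{y_q^{p+q-1}\,\D_{(p,q)}(x,y)}{\D_{(p,q-1)}(\alpha,\beta)\,\prod_{j=1}^p(y_q+x_j)\,\prod_{k=1}^{q-1}(y_q-y_k)}=\T_{(p,q)}(x,y),
\]
which follows from $x_j(y_q-y_k)+y_k(y_q+x_j)=y_q(x_j+y_k)$ applied to each factor $\alpha_j+\beta_k$ of $\D_{(p,q-1)}(\alpha,\beta)$, combined with the resulting cancellation of powers of $y_q$, $(y_q+x_j)$, and $(y_q-y_k)$ against the explicit formula \eqref{eq-tpq def}. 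I expect the principal obstacle to be not conceptual but combinatorial: correctly coordinating the two charts used for $\Omega(\diag(x,-y))$ and $\Omega(\diag(\alpha,-\beta))$, tracking the Jacobian exactly, and verifying the exponent bookkeeping in the final identity.
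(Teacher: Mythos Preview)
Your treatment of continuity, symmetry, the base conditions, and duality is correct and essentially identical to the paper's. The recursion is where your argument diverges, and where a gap appears.

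Both you and the paper integrate out the real variable attached to the eigenvalue $-y_q$ and then make a linear change of variables in what remains; the resulting algebraic identity you arrive at is precisely the paper's \eqref{eq-dpq 2nd identity}. The difference is the choice of affine chart. The paper works in $\{z_1\neq 0\}$, centered on the \emph{positive} eigenvalue $x_1$ (this is exactly how Lemma~\ref{lemma-integral} is stated), so that the remaining $(p-1)+q$ real variables live in the set $E_{p-1,q}(a,b)$ of \eqref{eq-defsetE}; the paper's substitution is then the diagonal map $u=Mt$ carrying $E_{p-1,q-1}(a,b[q])$ to $E_{p-1,q-1}(d,e)$ with $d=\alpha[1]/\alpha_1$, $e=\beta/\alpha_1$. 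You instead work in $\{z_{p+q-1}\neq 0\}$, centered on the \emph{negative} eigenvalue $-y_{q-1}$, split the variables as $(t,u,w)\in\rl_+^p\times\rl_+^{q-2}\times\rl_+$, and use the substitution with denominator $y_q-y_{q-1}$. For $q\ge 2$ your computation is correct and arguably slightly cleaner, because the same chart (the one attached to the second-largest negative eigenvalue) serves all three quadrics $\Omega(\diag(x,-y))$, $\Omega(\diag(x,-y[q]))$, and $\Omega(\diag(\alpha,-\beta))$ without any relabeling.

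The gap is $q=1$: there is no eigenvalue $-y_0$, your variable split involves $\rl_+^{-1}$, and the factor $y_q-y_{q-1}$ in your substitution is meaningless. But the recursion \eqref{eq-npq recursion} is required for every $q\ge 1$. The fix is easy---either verify $q=1$ directly (the claim there reduces to $\n_{(p,1)}(x,y_1)=\prod_j(x_j+y_1)-y_1^p$, since $\n_{(p,0)}\equiv 1$ and $\T_{(p,1)}=y_1^p$), or anchor the chart at a positive eigenvalue as the paper does, which works uniformly for all $q\ge1$ (with the case $p=0$ then handled trivially since the domain is empty). The paper's choice of chart was made precisely to avoid this case distinction; your chart trades that uniformity for a tidier matching of charts at each inductive step.
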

\begin{propc}\label{prop: S is in N}
The double sequence  $\s=\{\s_{(p,q)}:(p,q)\in\mathbb{N}^2\}$ of \eqref{eq-Sdef} belongs to $\mathscr{N}$.
\end{propc}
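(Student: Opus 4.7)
I would verify in turn the four defining axioms of $\mathscr{N}$ for the double sequence $\s$. Each $\s_{(p,q)}$ is a polynomial, hence continuous, and its symmetry in $x_1,\dots,x_p$ and separately in $y_1,\dots,y_q$ is inherited from the corresponding symmetries of the Schur polynomials. Positivity on $\rl_+^p\times\rl_+^q$ for $p,q\geq 1$ holds because Schur polynomials have nonnegative integer coefficients in the monomial basis and $\cc(p,q)$ is nonempty (it contains the rectangular partition $(q,\dots,q)$). For the base conditions, $\cc(1,0)$ consists only of the empty partition $0$, giving $\s_{(1,0)}=s_0\cdot s_0=1$, while $\cc(0,0)$ is empty (the empty partition has no first component $\lambda_1$), which forces $\s_{(0,0)}=0$.

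\textbf{Duality.} Starting from the dual Cauchy identity \eqref{eq-dualcauchy} and partitioning $\bb(p,q)$ according to whether $\lambda_1=q$ or $\lambda_1<q$,
\[
\D_{(p,q)}(x,y)=\s_{(p,q)}(x,y)+\sum_{\substack{\lambda\in\bb(p,q)\\ \lambda_1<q}}s_\lambda(x)\,s_{\lambda^*_{p,q}}(y).
\]
The observations $\lambda_1<q \Leftrightarrow \lambda'_q=0 \Leftrightarrow (\lambda^*_{p,q})_1=p$, combined with the involution $(\lambda^*_{p,q})^*_{q,p}=\lambda$ coming from the $p\times q$ box-complement interpretation, show that $\lambda\mapsto \lambda^*_{p,q}$ is a bijection of $\{\lambda\in\bb(p,q):\lambda_1<q\}$ onto $\cc(q,p)$. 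Relabeling the remainder sum by $\mu=\lambda^*_{p,q}$ turns it into $\s_{(q,p)}(y,x)$, establishing \eqref{eq-npq duality relation}.

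\textbf{Recursion (main obstacle).} I plan to use the alternative formula of Proposition~\ref{prop-spqalternative} together with a direct algebraic manipulation. The factorization $\D_{(p-1,q)}(x[\ell],y)=\frac{\prod_{j=1}^p(x_j+y_q)}{x_\ell+y_q}\,\D_{(p-1,q-1)}(x[\ell],y[q])$ combined with the elementary identity $\frac{x_\ell^{p+q-1}}{x_\ell+y_q}=x_\ell^{p+q-2}-\frac{y_q x_\ell^{p+q-2}}{x_\ell+y_q}$ recasts \eqref{eq-Spq in terms of sum} as
\[
\s_{(p,q)}(x,y)=\Bigl(\prod_{j=1}^p(x_j+y_q)\Bigr)\s_{(p,q-1)}(x,y[q])-y_q\sum_{\ell=1}^p\frac{x_\ell^{p+q-2}\prod_{j\ne\ell}(x_j+y_q)}{\prod_{k\ne\ell}(x_\ell-x_k)}\D_{(p-1,q-1)}(x[\ell],y[q]).
\]
In view of \eqref{eq-npq recursion}, it remains to show that $\T_{(p,q)}(x,y)\,\s_{(p,q-1)}(\alpha,\beta)$ equals the second sum. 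Expanding $\s_{(p,q-1)}(\alpha,\beta)$ by Proposition~\ref{prop-spqalternative} and then applying the M\"obius-type identities
\[
\alpha_\ell-\alpha_k=\frac{y_q(x_\ell-x_k)}{(y_q+x_\ell)(y_q+x_k)},\qquad \alpha_j+\beta_k=\frac{y_q(x_j+y_k)}{(y_q+x_j)(y_q-y_k)},
\]
each term becomes a rational function in the original variables. The principal technical step is then the careful accounting of exponents: the combined powers of $y_q$, $(y_q+x_j)$, and $(y_q-y_k)$ produced by the substitution must cancel those built into $\T_{(p,q)}$ so as to leave exactly $y_q\prod_{j\ne\ell}(y_q+x_j)$ in each summand. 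This bookkeeping---which explains why the apparently peculiar exponent $pq-2p-q+1$ appears in \eqref{eq-tpq def}---is the only real work in the proof, and it involves no ideas beyond those already invoked.
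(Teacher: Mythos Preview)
Your proposal is correct and follows essentially the same route as the paper: both rely on Proposition~\ref{prop-spqalternative}, the dual Cauchy identity, and the same M\"obius-type substitutions for $\alpha_\ell-\alpha_k$ and $\alpha_j+\beta_k$, with the only cosmetic difference being that you split $\s_{(p,q)}$ via $x_\ell^{p+q-1}=x_\ell^{p+q-2}(x_\ell+y_q)-y_q x_\ell^{p+q-2}$ and then identify the pieces, whereas the paper computes $G$ and $H$ separately and recombines via $(x_\ell+y_q)-y_q=x_\ell$. You also explicitly note the positivity of $\s_{(p,q)}$ on $\rl_+^p\times\rl_+^q$, which the paper passes over in silence; remember to dispatch the $p=0$ case of the recursion separately (both sides vanish since $\cc(0,q)=\emptyset$).
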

\subsection{Proof of the main theorem assuming Propositions~A, B and C}
Postponing the proof of the three propositions, we can now complete the proof of the main theorem.  Let $A$ be as in the statement of the main theorem, and let $\Delta$ be its diagonal form as in \eqref{eq-Delta}. Combining \eqref{eq-diagonal} and \eqref{eq-DeltatoD} we see that
\begin{align*}
    \volfs(\Omega(A))&=\frac{\pi^r}{n(n-1)\dots (n-r+1)} \volfs(\Omega(\diag(\mu, -\nu)))\\
    &= \frac{\pi^r}{n(n-1)\dots (p+q)}\cdot \frac{\pi^{p+q-1}}{(p+q-1)!} \frac{\n_{(p,q)}(\mu,\nu)}{\D_{(p,q)}(\mu,\nu)} \quad\text{using definition~\ref{eq-npq def} of $\n$}\\
    &= \frac{\pi^n}{n!}\cdot\frac{\n_{(p,q)}(\mu,\nu)}{\D_{(p,q)}(\mu,\nu)}, \quad \text{ since $p+q+r=n+1$}.
\end{align*}

By Proposition B we get $\n\in \mathscr{N}$ and by Proposition C, we have $\s\in \mathscr{N}$. But Proposition A says that $\mathscr{N}$ contains at most one element, so $\s=\n$. Therefore, for each $(p,q)\in\mathbb{N}^2$, 
\[\volfs(\Omega(A))=\frac{\pi^n}{n!}\cdot\frac{\s_{(p,q)}(\mu,\nu)}{\D_{(p,q)}(\mu,\nu)},\] which is  \eqref{eq-explicit}, on noting from \eqref{eq-cpnvol} 
that $\frac{\pi^n}{n!}=\volfs(\cx\p^n).$
\section{The computations}\label{sec-computations}
\subsection{Proof of Proposition~A} Let $N,S\in\mathscr{N}$. It will suffice to show that $N_{(p,q)}(x,y)=S_{(p,q)}(x,y)$ for all $(p,q)\in \mathbb{N}^2, x\in \rl_+^p$
 and whenever
$y\in \rl_+^q$ is such that the components $y_1,\dots, y_q$ are distinct. Notice then we can use the recursion property \eqref{eq-npq recursion} since
the functions $N_{(p,q)}$ and $S_{(p,q)}$ are symmetric in $(x_1,\dots, x_p)$ and $(y_1,\dots, y_q)$. The general case when $y$ can have repeated entries can then be handled by the hypothesis of continuity of $N$ and $S$.

Using  \eqref{eq:base conditions} we have $N_{(0,0)}(\cdot,\cdot)=S_{(0,0)}(\cdot,\cdot)=0$. We now show by induction that 
    $N_{(0,q)}=S_{(0,q)}$, for all $q\ge 1$. We have for $x>0$
    \begin{align*}
        N_{(0,1)}(\cdot, x)&= \D_{(1,0)}(x,\cdot) - N_{(1,0)}(x,\cdot) \quad \text{ by duality}\\
        &= \D_{(1,0)}(x,\cdot) - S_{(1,0)}(x,\cdot) \quad \text{by the base condition  $N_{(1,0)}(x, \cdot)=S_{(1,0)}(x, \cdot) = 1$} \\
        &= S_{(0,1)}(\cdot, x) \quad \text{ by duality again.}
    \end{align*}
 This serves as the basis of the induction. Now   assume that $N_{(0,q)}=S_{(0,q)}$ for some $q$. Let $y'=(y_1,\cdots,y_{q+1})$. Using the recursion relation from equation \eqref{eq-npq recursion}, and the convention that empty products are 1, we obtain
\begin{align*}
    N_{(0,q+1)}(\cdot,y')&=1\cdot N_{(0,q)}(\cdot,y'[q+1])-\T_{(0,q+1)}(\cdot,y')\cdot N_{(0,q)}(\cdot,\beta(y'))\\
    &=1\cdot S_{(0,q)}(\cdot,y'[q+1])-\T_{(0,q+1)}(\cdot,y')\cdot S_{(0,q)}(\cdot,\beta(y')) \quad\text{(Induction hypothesis)}\\
    &=S_{(0,q+1)}(\cdot,y').
\end{align*}
Therefore, $N_{(0,q)}=S_{(0,q)}$ for all $q\ge 1$. Now it follows that $N_{(p,0)}=S_{(p,0)}$, for all $p\ge 1$, since  using the duality property 
\[    N_{(p,0)}(x,\cdot)=\D_{(p,0)}(x,\cdot)-N_{(0,p)}(\cdot,x)=\D_{(p,0)}(x,\cdot)-S_{(0,p)}(\cdot,x)=S_{(p,0)}(x,\cdot).
\]
This leaves the case $p\ge 1$ and $q\ge 1$, which is again done by induction.  Assume that for some $p\ge1, q\ge 1$,
$N_{(p,q)} = S_{(p,q)}.$ For fixed $p\ge 1$, by the recursion \eqref{eq-npq recursion}, for  $y'=(y_1,\cdots,y_{q+1})$
\[
\begin{aligned}
N_{(p,q+1)}(x,y')
&= \left(\prod_{j=1}^p (x_j + y_{q+1})\right)\cdot N_{(p,q)}(x, y'[q+1])
   - \T_{(p,q+1)}(x,y')\cdot N_{(p,q)}(\alpha(x,y'), \beta(y'))\\
&= \left(\prod_{j=1}^p (x_j + y_{q+1})\right)\cdot S_{(p,q)}(x, y'[q+1])
   - \T_{(p,q+1)}(x,y')\cdot S_{(p,q)}(\alpha(x,y'), \beta(y'))\\&\phantom{=====}\text{(by induction hypothesis)}\\
   &=S_{(p,q+1)}(x,y').
\end{aligned}
\]
Since $p \ge 1$ was arbitrary, we have $N_{(p,q)} = S_{(p,q)}$ for every $p \ge 1$ and every $q \ge 1$. Together with the previously established equalities for $p = 0$, this shows that $N_{(p,q)} = S_{(p,q)}$ for all $(p,q) \in \mathbb{N}^2$. Thus, $N = S$, and $\mathscr{N}$ contains at most one element.

\subsection{Proof of Proposition B} We collect two simple computations with the function $\D_{(p,q)}$ which will be needed below as well as in the proof of Proposition~C. Recalling the notation $x[q]$ 
from \eqref{eq-y[k]}, note that
 \begin{align}
        \D_{(p,q)}(x,y)&=\prod\limits_{k=1}^q\prod\limits_{j=1}^p(x_j+y_k)=\left(\prod\limits_{k=1}^{q-1}\prod\limits_{j=1}^p(x_j+y_k)\right)\cdot \prod\limits_{j=1}^p (x_j+y_q)\nonumber \\& =\D_{(p,q-1)}(x,y[q])\cdot \prod\limits_{j=1}^p (x_j+y_q) \label{eq-dpq first identity},
    \end{align}
and also recalling the meaning of $\alpha(x,y)$ and $\beta(y)$ from \eqref{eq-defalpha} and \eqref{eq-defbeta} above, we have
\begin{align}
    \D_{(p,q-1)}(\alpha(x,y),\beta(y))
&=\D_{(p,q-1)}\left(\left(\frac{x_j}{x_j+y_q}\right)_{j=1}^{p},\left(\frac{y_k}{y_q-y_k}\right)_{k=1}^{q-1}\right)\nonumber\\& = \prod_{k=1}^{q-1}\prod_{j=1}^{p}\left(\frac{x_j}{x_j+y_q} + \frac{y_k}{y_q-y_k}\right)\nonumber \\
& = \frac{y_q^{pq-p} \prod\limits_{k=1}^{q-1}\prod\limits_{j=1}^{p}(x_j+y_k)}{\prod\limits_{j=1}^p(x_j+y_q)^{q-1} \prod\limits_{k=1}^{q-1}(y_q-y_k)^p}\nonumber\\&=\frac{y_q^{pq-p}}{\prod\limits_{j=1}^p(x_j+y_q)^{q-1} \prod\limits_{k=1}^{q-1}(y_q-y_k)^p} \cdot \D_{(p,q-1)}(x,y[q])         \label{eq-dpqalphabeta identity}\\
&=\frac{y_q^{p+q-1}}{\prod\limits_{k=1}^{q-1}(y_q-y_k)}\cdot\frac{y_q^{pq-2p-q+1}}{\prod\limits_{j=1}^p(x_j+y_q)^{q-1} \prod\limits_{k=1}^{q-1}(y_q-y_k)^{p-1}}\cdot \D_{(p,q-1)}(x,y[q])\nonumber\\
&=\frac{y_q^{p+q-1}}{\prod\limits_{k=1}^{q-1}(y_q-y_k)}\cdot \frac{\D_{(p,q-1)}(x,y[q])}{\T_{(p,q)}(x,y)}\quad\text{(with $\T_{(p,q)}(x,y)$  as in \eqref{eq-tpq def})}\nonumber\\
&=\frac{y_q^{p+q-1}}{\prod\limits_{k=1}^{q-1}(y_q-y_k)\cdot \prod\limits_{j=1}^p(x_j+y_q)}\cdot \frac{\D_{(p,q)}(x,y)}{\T_{(p,q)}(x,y)} \label{eq-dpq 2nd identity}
\end{align}
where we have used  \eqref{eq-dpq first identity} to obtain \eqref{eq-dpqalphabeta identity} and \eqref{eq-dpq 2nd identity} from the previous steps.

Now we will verify that the double sequence $\n$ defined by \eqref{eq-npq def} belongs to the class $\mathscr{N}.$ Notice that it is clear from a routine application of the 
dominated convergence theorem that $\n_{(p,q)}$ is a continuous function of $p+q$ positive real variables. Further, since the Fubini-Study volume is invariant under projectivized unitary transformations, it is clear that the factor $\volfs(\Omega(\diag(x,-y)))$ is invariant under permutations of the variables $(x_1,\dots, x_p)$ and also under permutations of the variables $(y_1,\dots, y_q)$, i.e., it is symmetric in $x$ and $y$. Since $\D_{(p,q)}(x,y)$ also obviously has the same symmetry property, it follows that $\n_{(p,q)}(x,y)$ is also symmetric in $(x_1,\dots, x_p)$ and $(y_1,\dots, y_q)$. Let us now show that the double sequence $\n$ does satisfy the three defining conditions of the class $\mathscr{N}$.

\subsubsection{Verification of the base condition} The condition $\n_{(0,0)}(\cdot,\cdot)=0$ holds by definition. To compute $\n_{(1,0)}(x,\cdot)$ notice that in formula
\eqref{eq-npq def} defining this quantity, the set $\Omega(\diag(x,\cdot))$ is $\mathbb{CP}^0$ which has 
Fubini-Study volume 1 by definition, and $\D_{(1,0)}(x,\cdot)=1$ as an empty product, so 
    \begin{align*}
	\n_{(1,0)}(x,\cdot)=\frac{(1+0-1)!}{\pi^{1+0-1}}\cdot \D_{(1,0)}(x,\cdot)\cdot \volfs(\Omega(\diag(x,\cdot)))=1\cdot 1\cdot 1=1,
\end{align*}
 completing the proof.  
\subsubsection{Verification of Duality}
 If $p\geq 1, q=0$, then $\diag(x,\cdot)$ is positive definite and 
$\diag(\cdot,-x)$ is negative definite, so that $\Omega(\diag(x,\cdot))=\cx\p^{p-1}$ whereas
$\Omega(\diag(\cdot,-x))=\emptyset$. Therefore
\begin{align*}
 \n_{(p,0)}(x,\cdot)+ \n_{(0,p)}(\cdot,x)= \frac{(p-1)!}{\pi^{p-1}}\D_{(p,0)}(x,\cdot) \cdot   \volfs(\mathbb{CP}^{p-1})+0= \D_{(p,0)}(x,\cdot),
\end{align*}
using \eqref{eq-cpnvol}. The case $p=0,q\geq 1$ now follows since $\D_{(0,q)}(\cdot, y)= \D_{(p,0)}(x,\cdot)=1$ as empty products. 

Now assume $p,q\geq 1$ and let $x\in \rl_+^p, y\in \rl_+^q$. Let $D=\diag(x,-y)$, so that  we have a disjoint union representation:
\[  \mathbb{CP}^{p+q-1}=\Omega(D)\sqcup \Omega(-D)\sqcup H,\]
where $H$ is the hyperquadric surface $\{[z]\in\mathbb{CP}^{p+q-1}:\left(Dz,z\right)=0\}$. Since $p,q\geq 1$, the set $H$ has measure zero, and computing the volumes of the other sets we see that
\[ \volfs(\Omega(D))=\volfs{ \mathbb{CP}^{p+q-1}} - \volfs(\Omega(-D))=\frac{\pi^{p+q-1}}{(p+q-1)!}-\volfs(\Omega(-D)). \]
Therefore, from the definition \eqref{eq-npq def} we have
\begin{align*}
	\n_{(p,q)}(x,y)& =\frac{(p+q-1)!}{\pi^{p+q-1}}\cdot \D_{(p,q)}(x,y)\cdot \volfs(\Omega(\diag(x,-y))\\
	&=\frac{(p+q-1)!}{\pi^{p+q-1}}\cdot \D_{(p,q)}(x,y)\cdot \left(  \frac{\pi^{p+q-1}}{(p+q-1)!}-\volfs(\Omega(-D))\right)\\
	&= \D_{(p,q)}(x,y)- \frac{(q+p-1)!}{\pi^{q+p-1}}\D_{(q,p)}(y,x) \volfs(\Omega(\diag(y,-x)))\\
	&= \D_{(p,q)}(x,y) -\n_{(q,p)}(y,x),
	\end{align*}
where in the penultimate expression, we have used the fact that $\D_{(q,p)}(y,x)=\D_{(p,q)}(x,y)$.
This establishes  \eqref{eq-npq duality relation}.

    \subsubsection{Verification of the recursion relation} First, let $p\geq 1$. Let $a,b$ be defined in terms of $x,y$ as in \eqref{eq-deftuple"a"} and \eqref{eq-deftuple"b"}, and let  $E_{p-1,q}(a,b)$ be as in \eqref{eq-defsetE}. Using the notation   $t'=(t_1,\cdots,t_{p+q-2})\in \mathbb R_+^{p+q-2}$, we can rewrite the definition as:
\begin{align}
&E_{p-1,q}(a,b)\nonumber\\
&=\left\{(t',t_{p+q-1})\in \mathbb R_+^{p+q-2}\times \rl_+:0<t_{p+q-1}<\frac{1}{b_q}\left(1+\sum_{j=1}^{p-1}a_jt_j-\sum_{k=1}^{q-1}b_kt_{p+k-1}\right)\right\}\nonumber\\
&=\left\{(t',t_{p+q-1})\in \mathbb R_+^{p+q-2}\times \rl_+: t_{p+q-1}<\frac{1}{b_q}\left(1+\sum_{j=1}^{p-1}a_jt_j-\sum_{k=1}^{q-1}b_kt_{p+k-1}\right)\text{ and } \sum_{j=1}^{p-1}a_jt_j-\sum_{k=1}^{q-1}b_kt_{p+k-1}>-1\right\}\nonumber\\
&=\left\{(t',t_{p+q-1})\in \mathbb R_+^{p+q-2}\times \rl_+:t'\in E_{p-1,q-1}(a,b[q]), \, t_{p+q-1}<A(t')\right\},\label{eq-Epq2}
\end{align}
with  $A(t')=\dfrac{1}{b_q}\left(1+\sum\limits_{j=1}^{p-1}a_jt_j-\sum\limits_{k=1}^{q-1}b_kt_{p+k-1}\right),$
and 
    $b[q]=(b_1,\cdots,b_{q-1})$ is as in \eqref{eq-y[k]}.
    Now, from  Lemma~\ref{lemma-integral} (see \eqref{eq-V(p,q)asintegral}), we have, setting $\displaystyle{s(t')=\sum_{j=1}^{p+q-2} t_j}$:
    \begin{align*}
    	&\volfs(\Omega(\diag(x,-y)))\\&= \pi^{p+q-1} \int\limits_{E_{p-1,q}(a,b)} \frac{dV(t)}{\left(1 + s(t')+ t_{p+q-1}  \right)^{p+q}}.\\
    	&=  \pi^{p+q-1} \int\limits_{t'\in E_{p-1,q-1}(a,b[q])}  \left(\,\,\int\limits_{t_{p+q-1}=0}^{A(t')}\frac{dt_{p+q-1}}{\left(1 + s(t')+ t_{p+q-1}  \right)^{p+q}}\right)dV(t'),\text{ using  \eqref{eq-Epq2}}\\
    	& =\frac{\pi^{p+q-1}}{p+q-1} \cdot\int\limits_{t'\in E_{p-1,q-1}(a,b[q])}\left(\frac{1}{\left(1+s(t')\right)^{p+q-1}} -\frac{1}{\left(\left(1+\frac{1}{b_q}\right)+\sum\limits_{j=1}^{p-1}\left(1+\frac{a_j}{b_q}\right)t_j+\sum\limits_{k=1}^{q-1}\left(1-\frac{b_k}{b_q}\right)t_{p+k-1}\right)^{p+q-1}}\right)dV(t'),
    \end{align*}
    by an elementary computation of the inner integral in the variable $t_{p+q-1}$. 
Therefore, from the definition \eqref{eq-npq def} of $\n_{(p,q)}$:
\begin{align}
	\n_{(p,q)}(x,y)&=\frac{(p+q-1)!}{\pi^{p+q-1}}\cdot \D_{(p,q)}(x,y)\cdot\volfs(\Omega(\diag(x,-y)))=\mathscr{J}-\mathscr{K},\label{eq-integralstepforNp,q-1}
\end{align}

%
where \begin{align}
    \mathscr{J}&={(p+q-2)!}\cdot \D_{(p,q)}(x,y)\cdot \int\limits_{ E_{p-1,q-1}(a,b[q])}\frac{dV(t')}{\left(1+\sum_{j=1}^{p+q-2} t_j\right)^{p+q-1}}\nonumber\\
   & =\frac{(p+q-2)!}{\pi^{p+q-2}}\cdot \prod\limits_{j=1}^p (x_j+y_q)\cdot \D_{(p,q-1)}(x,y[q])\cdot \volfs(\Omega(\diag(x,-y[q])))\nonumber\\ &\text{ (using \eqref{eq-dpq first identity} and Lemma~\ref{lemma-integral})}\nonumber\\
   &=\left(\prod\limits_{j=1}^p (x_j+y_q)\right)\cdot \n_{(p,q-1)}(x,y[q]),\label{eq-first term of recursion}
\end{align}
and 
\begin{align}
\mathscr{K}
&={(p+q-2)!}\cdot \D_{(p,q)}(x,y)\cdot  I,\label{eq-Kintegral}
   \end{align}
   with \begin{align}
       I&=\int\limits_{ E_{p-1,q-1}(a,b[q])}\frac{dV(t')}{\left(\left(1+\frac{1}{b_q}\right)+\sum\limits_{j=1}^{p-1}\left(1+\frac{a_j}{b_q}\right)t_j+\sum\limits_{k=1}^{q-1}\left(1-\frac{b_k}{b_q}\right)t_{p+k-1}\right)^{p+q-1}}.\nonumber\\
&=\left(\frac{b_q}{b_q+1}\right)^{p+q-1}\int\limits_{ E_{p-1,q-1}(a,b[q])}\frac{dV(t')}{\left(1+\sum\limits_{j=1}^{p-1}\frac{b_q+a_j}{b_q+1}t_j+\sum\limits_{k=1}^{q-1}\frac{b_q-b_k}{b_q+1}t_{p+k-1}\right)^{p+q-1}}.\label{eq-Iintegral}
\end{align}
To compute the integral $I$,  make a linear change of variables in $\rl^{p+q-2}$ given by $u=Mt$ where 
 $M$ is the $(p+q-2)\times (p+q-2)$ diagonal matrix with diagonal entries
\[M_{\ell\ell}=
\begin{cases}
	\dfrac{b_q+a_\ell}{b_q+1} & \text{for } 1 \le \ell \le p-1 \\
	\dfrac{b_q-b_{\ell-p+1}}{b_q+1} & \text{for } p \le \ell \le p+q-2.
\end{cases}\]
Then we have
\begin{equation}
    I=\eqref{eq-Iintegral}= \left(\frac{b_q}{b_q+1}\right)^{p+q-1}\cdot\frac{1}{\abs{\det M}}\cdot\int\limits_{ M(E_{p-1,q-1}(a,b[q]))}\frac{dV(u)}{\left(1+s(u)\right)^{p+q-1}}.\label{eq-Iintegral2}
\end{equation}
Now, in the recursion relation \eqref{eq-npq recursion} that we must show holds for $\n_{(p,q)}$, since it is assumed that $x_j$ and $y_k$ are positive, we have $a_j=x_{j+1}/x_1 > 0,$ for $1\le j\le p-1$, $b_k=y_k/x_1 > 0$, for $1\le k\le q$ (see the definition of the tuples $a$ and $b$ in \eqref{eq-deftuple"a"} and \eqref{eq-deftuple"b"}). Further, since in 
\eqref{eq-npq recursion} it is assumed that $y_1<y_2<\cdots<y_q$, it follows from the definition of $b$ that $b_q-b_k = (y_q-y_k)/x_1 > 0$ for $1 \le k \le q-1$. Consequently each
diagonal entry $M_{\ell\ell}$ of the matrix $M$ of the change of variables is strictly positive, and we have
\begin{equation}
    \label{eq-detM}
    \abs{\det M}= \prod\limits_{\ell=1}^{p+q-2} M_{\ell\ell}
= \frac{\prod\limits_{j=1}^{p-1}(b_q+a_j)\prod\limits_{k=1}^{q-1}(b_q-b_k)}{(b_q+1)^{p+q-2}}.
\end{equation}
To compute \eqref{eq-Iintegral2}, we note that 
\begin{equation}
    M(E_{p-1,q-1}(a,b[q]))=E_{p-1,q-1}(d,e)\label{eq-ME}
\end{equation}
where, with $\alpha(x,y)$ and $\beta(y)$ as in are as in  \eqref{eq-defalpha} and \eqref{eq-defbeta},
\begin{align}
d= \frac{\alpha(x,y)[1]}{\alpha_1}, \quad \text{ i.e. }  d_j= \frac{\alpha_{j+1}}{\alpha_1},\quad \text{for } 1\le j\le p-1, \label{eq-defd_j}
\end{align}
and
\begin{align}
e= \frac{\beta(y)}{\alpha_1}, \quad \text{ i.e. }  e_k= \frac{\beta_k}{\alpha_1},\quad \text{for } 1\le k\le q-1\label{eq-defe_k}.
\end{align}

To see \eqref{eq-ME}, we first note that using $\alpha_j=\dfrac{x_j}{y_q+x_j}$ for $1\le j\le p$ and $\beta_k=\dfrac{y_k}{y_q-y_k}$ for $1\le k \le q-1$, we obtain $d_j=\dfrac{a_j(b_q+1)}{b_q+a_j}$ and $e_k=\dfrac{b_k(b_q+1)}{b_q-b_k}$ (recall from \eqref{eq-deftuple"a"} and \eqref{eq-deftuple"b"} that $a=\dfrac{x[1]}{x_1}, b=\dfrac{y}{x_1}$.)
Therefore, if we set $u_j=M_{jj}t_j$
 for $1\le j\le p+q-2$,
\begin{align}
 \sum\limits_{j=1}^{p-1}d_ju_j-\sum\limits_{k=1}^{q-1} e_ku_{p+k-1}\nonumber
 &=\sum\limits_{j=1}^{p-1}d_j\frac{b_q+a_j}{b_q+1}t_j-\sum\limits_{k=1}^{q-1} e_k\frac{b_q-b_k}{b_q+1}t_{p+k-1}\nonumber\\
 &=\sum\limits_{j=1}^{p-1}\left(\frac{a_j(b_q+1)}{b_q+a_j}\right)\left(\frac{b_q+a_j}{b_q+1}\right)t_j-\sum\limits_{k=1}^{q-1} \left(\frac{b_k(b_q+1)}{b_q-b_k}\right)\left(\frac{b_q-b_k}{b_q+1}\right)t_{p+k-1}\nonumber\\
 &=\sum\limits_{j=1}^{p-1}a_jt_j-\sum\limits_{k=1}^{q-1}b_kt_{p+k-1},\label{eq-Mcomputation}
\end{align}
Recalling from \eqref{eq-defsetE} the definition of the sets  $E_{p-1,q-1}(a,b[q]))$ and $E_{p-1,q-1}(d,e)$, the equality \eqref{eq-ME} now follows.  Therefore, using \eqref{eq-detM} and \eqref{eq-ME}, the  integral $I$ in \eqref{eq-Iintegral} transformed as follows:
\begin{align}
I=\eqref{eq-Iintegral2}
&= \frac{b_q^{p+q-1}}{(b_q+1)^{p+q-1}}\cdot\frac{(b_q+1)^{p+q-2}}{\prod\limits_{j=1}^{p-1}(b_q+a_j)\prod\limits_{k=1}^{q-1}(b_q-b_k)}\cdot\int\limits_{E_{p-1,q-1}(d,e)}\frac{dV(u)}{\left(1+s(u)\right)^{p+q-1}}\nonumber\\
&=\frac{y_q^{p+q-1}}{\prod\limits_{j=1}^{p}(y_q+x_j)\cdot\prod\limits_{k=1}^{q-1}(y_q-y_k)}\cdot\int\limits_{E_{p-1,q-1}(d,e)}\frac{dV(u)}{\left(1+s(u)\right)^{p+q-1}} \label{eq-transformedintegral}
\end{align}
where \eqref{eq-transformedintegral} is obtained from the previous step by substituting $a_j=\dfrac{x_{j+1}}{x_1}$ from \eqref{eq-deftuple"a"} and $b_k=\dfrac{y_k}{x_1}$ from \eqref{eq-deftuple"b"}. Now, from \eqref{eq-dpq 2nd identity} it follows that
\begin{align}
    \D_{(p,q)}(x,y)&= \frac{\prod\limits_{j=1}^p(x_j+y_q)\cdot\prod\limits_{k=1}^{q-1}(y_q-y_k)}{y_q^{p+q-1}}\cdot  \T_{(p,q)}(x,y)\cdot \D_{(p,q-1)}(\alpha(x,y),\beta(y)).\label{eq-dpqxy in terms of dpqalphabeta}
\end{align}
Therefore, substituting the results from equations \eqref{eq-dpqxy in terms of dpqalphabeta} and \eqref{eq-transformedintegral} into $\mathscr{K}$, we obtain
\begin{align}
 \mathscr{K}
 &={(p+q-2)!}\cdot\frac{\prod\limits_{j=1}^p(x_j+y_q)\cdot\prod\limits_{k=1}^{q-1}(y_q-y_k)}{y_q^{p+q-1}}\cdot  \T_{(p,q)}(x,y)\cdot \D_{(p,q-1)}(\alpha(x,y),\beta(y))\nonumber\\
    &\cdot \frac{y_q^{p+q-1}}{\prod\limits_{j=1}^{p}(y_q+x_j)\cdot\prod\limits_{k=1}^{q-1}(y_q-y_k)}\cdot \int\limits_{E_{p-1,q-1}(d,e)}\frac{dV(u)}{\left(1+s(u)\right)^{p+q-1}}\nonumber\\
    &=\T_{(p,q)}(x,y)\cdot \frac{(p+q-2)!}{\pi^{p+q-2}}\cdot \D_{(p,q-1)}(\alpha(x,y),\beta(y))\cdot \pi^{p+q-2}\cdot \int\limits_{E_{p-1,q-1}\left(\frac{\alpha(x,y)[1]}{\alpha_1}, \frac{\beta(y)}{\alpha_1} \right)}\frac{dV(u)}{\left(1+s(u)\right)^{p+q-1}}\nonumber\\
    &=\T_{(p,q)}(x,y)\cdot \frac{(p+q-2)!}{\pi^{p+q-2}}\cdot \D_{(p,q-1)}(\alpha(x,y),\beta(y))\cdot \volfs(\Omega(\diag(\alpha(x,y),-\beta(y)))) \label{eq-usingintegral}\\
    &=\T_{(p,q)}(x,y)\cdot \n_{(p,q-1)}(\alpha(x,y),\beta(y)),\label{eq-second term of recursion}
\end{align}
where, with  $\T_{(p,q)}(x,y)$ is as in \eqref{eq-tpq def}, in \eqref{eq-usingintegral} we have used the integral representation \eqref{eq-V(p,q)asintegral} from Lemma~\ref{lemma-integral}, and in \eqref{eq-second term of recursion} we have used the definition~\ref{eq-npq def} of $\n_{(p,q)}.$ Plugging in \eqref{eq-first term of recursion} and \eqref{eq-second term of recursion} into \eqref{eq-integralstepforNp,q-1}, the recursion formula \eqref{eq-npq recursion} follows when $p\geq 1$.

 To complete the proof, we verify the recursion relation for $p=0, q\geq 1$. Then for $y\in \rl_+^q$, the quadric domain $\Omega(\diag(\cdot, -y))$ is the empty subset of $\cx^{q-1},$
 and consequently
 \[ \n_{(0,q)}(\cdot,y)=\frac{(q+0-1)!}{\pi^{q+0-1}}\cdot \D_{(0,q)}(\cdot,y)\cdot \volfs(\Omega(\diag(\cdot,-y)))=0, \]
noting that $\D_{(0,q)}(\cdot,y)=1$ as an empty product. It follows that in \eqref{eq-npq recursion}, the left hand side is zero, and each of the two terms on the right hand side 
is also zero. This completes the proof of the recursion relation, and therefore of Proposition B.

\subsection{Proof of Proposition C}
    We verify the four defining properties of the class $\mathscr{N}$  for the functions $\s_{(p,q)}(x,y)= \sum_{\lambda\in \cc(p,q)}s_\lambda(x) s_{\lambda^*_{p,q}}(y).$ The continuity of the $\s_{(p,q)}$ is trivial, since the $\s_{(p,q)}$ are polynomials, and so is the symmetry, since the Schur polynomials being symmetric, each summand in the defining sum  $\sum_{\xi\in\mathfrak{C}(p,q)}s_\xi(x) s_{\xi^\ast_{p,q}}(y)$ of $\s_{(p,q)}$ is symmetric in the variables $x=(x_1,\dots, x_p)$ and $y=(y_1,\dots, y_q).$

    \subsubsection{ Verification of the base conditions of \eqref{eq:base conditions}}  Notice that $\cc(0,0)$ is the collection of partitions with no parts in which the largest part is 0, so that $\cc(0,0)$ is the empty set. Therefore $\s_{(0,0)}=0,$ as an empty sum. 

Now $\cc(1,0)$ consists of partitions having at most one part in which the largest part is 0, so $\cc(1,0)$ consists only of the partition
$(0)$, the empty partition with no nonzero parts.    By the definition \eqref{eq-starpqdef}, $(0)^*_{1,0}=(0).$ The Schur polynomial for the empty partition is $s_{(0)}=1$, so 
\[
\s_{(1,0)}(x,\cdot) = \sum_{\xi \in \mathfrak{C}(1,0)} s_\xi(x) s_{\xi_{1,0}^*}(\cdot) = s_0(x) \cdot s_0(\cdot) = 1 \cdot 1 = 1,
\]
verifying the base conditions of $\mathscr{N}$.

\subsubsection{Verification of the duality relation, \eqref{eq-npq duality relation}} We need the following combinatorial lemma:
\begin{lem}
    \label{lem-combinatorial2}
    The mapping $\lambda\mapsto \lambda^{\ast}_{q,p}$ is a bijection from 
    $\mathfrak{C}(q,p)$ onto  $\mathfrak{B}(p,q) \setminus \mathfrak{C}(p,q)$, 
    whose inverse is the map $\mu \mapsto \mu^*_{p,q}.$
\end{lem}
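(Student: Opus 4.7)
The plan is to first verify that $\lambda \mapsto \lambda^*_{p,q}$ is a bijection from the full box set $\mathfrak{B}(p,q)$ onto $\mathfrak{B}(q,p)$, with inverse $\mu \mapsto \mu^*_{q,p}$, and then restrict this bijection to $\mathfrak{C}(q,p) \subset \mathfrak{B}(q,p)$ to identify its image with $\mathfrak{B}(p,q) \setminus \mathfrak{C}(p,q)$. Once this global bijection is in hand, the restricted statement becomes essentially a one-line observation.

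First I would establish the involution property. Given $\lambda \in \mathfrak{B}(p,q)$, set $\mu = \lambda^*_{p,q}$, so $\mu_k = p - \lambda'_{q-k+1}$ for $1 \leq k \leq q$. Computing the conjugate $\mu'$ via the definition $\mu'_k = \#\{j : \mu_j \geq k\}$, and using the fact that the sequence $(\lambda'_{q-j+1})_{j=1}^q$ is weakly increasing, one finds
\begin{equation*}
\mu'_k = q - (\lambda')'_{p-k+1} = q - \lambda_{p-k+1}, \qquad 1 \leq k \leq p,
\end{equation*}
where I use $(\lambda')' = \lambda$ (padding $\lambda$ with zeros to have exactly $p$ parts if needed). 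Substituting back yields $\mu^*_{q,p} = (q - \mu'_p, \ldots, q - \mu'_1) = (\lambda_1, \ldots, \lambda_p) = \lambda$. The symmetric computation shows that $\lambda \mapsto \lambda^*_{p,q}$ and $\mu \mapsto \mu^*_{q,p}$ are mutually inverse.

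Next I would read off the largest part of the image. For $\lambda \in \mathfrak{B}(q,p)$, the first (largest) entry of $\lambda^*_{q,p}$ is $q - \lambda'_p$ (after switching the roles of $p$ and $q$ in the definition \eqref{eq-starpqdef}). Hence
\begin{equation*}
\lambda^*_{q,p} \in \mathfrak{C}(p,q) \iff q - \lambda'_p = q \iff \lambda'_p = 0 \iff \#\{j : \lambda_j \geq p\} = 0 \iff \lambda_1 < p.
\end{equation*}
Since $\lambda \in \mathfrak{B}(q,p)$ forces $\lambda_1 \leq p$, the condition $\lambda_1 < p$ is exactly the negation of $\lambda \in \mathfrak{C}(q,p)$. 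Equivalently, $\lambda \in \mathfrak{C}(q,p)$ iff $\lambda^*_{q,p} \in \mathfrak{B}(p,q) \setminus \mathfrak{C}(p,q)$, so the global bijection $\mathfrak{B}(q,p) \to \mathfrak{B}(p,q)$ restricts to a bijection $\mathfrak{C}(q,p) \to \mathfrak{B}(p,q) \setminus \mathfrak{C}(p,q)$, with inverse $\mu \mapsto \mu^*_{p,q}$ by the involution property.

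The only real bookkeeping is the conjugate computation in the first step, where one must be careful about padding with zeros so that the identity $(\lambda')' = \lambda$ holds on the nose. Everything else is a direct unwinding of the definitions, so I expect no substantive obstacle.
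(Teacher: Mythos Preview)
Your proof is correct. Both your argument and the paper's rest on the same elementary computation of the conjugate of $\lambda^*_{p,q}$ and the observation that $(\lambda^*_{q,p})_1 = q - \lambda'_p$, so nothing substantively new happens in either approach.

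The organization differs slightly. You first prove the global involution $\mathfrak{B}(p,q)\leftrightarrow\mathfrak{B}(q,p)$ via $(\lambda^*_{p,q})^*_{q,p}=\lambda$ and then restrict, reading off the image of $\mathfrak{C}(q,p)$ from the formula for the largest part. The paper instead works directly with the restricted map: it checks the image lands in $\mathfrak{B}(p,q)\setminus\mathfrak{C}(p,q)$, proves injectivity by conjugating, constructs an explicit preimage for surjectivity, and only at the end verifies the inverse formula. Your route is a bit cleaner conceptually (one computation gives bijectivity and the inverse at once), while the paper's is marginally more self-contained in that it never states the auxiliary global bijection. Either way, the only point requiring care is the bookkeeping you already flag: padding $\lambda$ with zeros so that $(\lambda')'=\lambda$ holds as an equality of $p$-tuples.
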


\begin{proof} Set $f(\lambda)=\lambda^{\ast}_{q,p}$.
Let $\lambda \in \mathfrak{C}(q,p)$  so that $\lambda_1=p$, which implies $\lambda'_p = \#\{j: \lambda_j \ge p\} \ge 1$.
The largest part of $\lambda_{q,p}^*$ is by definition $(\lambda_{q,p}^*)_1 = q - \lambda'_p\leq q-1$. It follows that 
$\lambda_{q,p}^* \not \in \mathfrak{C}(p,q)$ and therefore $f$ maps into $\mathfrak{B}(p,q) \setminus \mathfrak{C}(p,q)$.

To show injectivity, assume $f(\lambda) = f(\nu)$ for $\lambda, \nu \in \mathfrak{C}(q,p)$. Then by the definition \eqref{eq-starpqdef}:
\[(q-\lambda_p', \dots, q-\lambda_1') = (q-\nu_p', \dots, q-\nu_1').\]
Equating the components shows $\lambda'_k = \nu'_k$ for $k=1, \dots, p$.
Since $\lambda', \nu'$ are partitions with at most $p$ non-zero parts, $\lambda' = \nu'$.
As $\lambda''=\lambda$, we conclude $\lambda = \nu$.
Therefore, $f$ is injective.

To show surjectivity, let $\mu \in \mathfrak{B}(p,q) \setminus \mathfrak{C}(p,q)$,  and we want a $\lambda\in \mathfrak{C}(q,p)$ satisfying 
$f(\lambda)=\mu$, i.e., $\lambda'_k=q- \mu_{p-k+1}$ for $1\leq k \leq p$. Since the conjugation operation on partitions is involutive, we can take
$\lambda=\nu'$ where $\nu_k=q-\mu_{p-k+1}.$ Now since $\mu \not \in \mathfrak{C}(p,q)$, we have $\mu_1 \leq q-1$, so that $\nu_p=q-\mu_1\geq 1.$
Consequently $\lambda_1=\nu'_1= \#\{j: \nu_j \ge 1\}=p$, which shows that $\lambda\in \mathfrak{C}(q,p).$
Hence, the map $f$ is a bijection. 

To compute the inverse of $f$, let $\mu=f(\lambda)$, i.e. $\mu_j=q-\lambda_{p-j+1}$ for $1\leq j \leq p$. From this we have $\lambda_\ell'=q-\mu_{p-\ell+1}, 1 \leq \ell \leq p$. Since the conjugation map $\lambda\mapsto \lambda'$ is its own inverse, we have 
\begin{align*}
    \lambda_k & =\# \{1 \leq \ell\leq p:\lambda_\ell'\geq k  \}=\# \{1 \leq \ell\leq p:q-\mu_{p-\ell+1}\geq k\}\\
    &= \# \{1 \leq \ell\leq p: \mu_\ell \leq q-k\}\\
    &= p- \# \{1 \leq \ell\leq p:\mu_{\ell}\geq q-k+1\}\\
    &= p- \mu_{q-k+1}',
\end{align*}
i.e. $\lambda =\mu_{p,q}^*.$
\end{proof}
Using Lemma~\ref{lem-combinatorial2}, we have
\[\s_{(q,p)}(y,x)=  \sum\limits_{\eta\in\mathfrak{C}(q,p)}s_\eta(y) s_{\eta^\ast_{q,p}}(x) = \sum\limits_{\mu\in\mathfrak{B}(p,q)\setminus\mathfrak{C}(p,q)}s_{\xi}(x) s_{\xi^\ast_{p,q}}(y),\]
where in the last step we set $\xi=\eta^*_{q,p}$. Therefore
        \begin{align*}
          \s_{(p,q)}(x,y)+\s_{(q,p)}(y,x)
          &=\sum\limits_{\xi\in\mathfrak{C}(p,q)}s_\xi(x) s_{\xi^\ast_{p,q}}(y)+\sum\limits_{\xi\in\mathfrak{B}(p,q)\setminus\mathfrak{C}(p,q)}s_{\xi}(x) s_{\xi^\ast_{p,q}}(y)=\sum\limits_{\xi\in\mathfrak{B}(p,q)}s_{\xi}(x) s_{\xi^\ast_{p,q}}(y)\\
          &=\D_{(p,q)}(x,y),\quad \text{ by \eqref{eq-dualcauchy},}
\end{align*}
completing the verification of the duality condition. 
\subsubsection{Verification of the recursion relation, \eqref{eq-npq recursion}}  We  want to show  \eqref{eq-npq recursion} for $\s$, i.e.,  
        \begin{align}
            \s_{(p,q)}(x,y)&
            = \underbrace{\left(\prod\limits\limits_{j=1}^{p}(x_j+y_q)\right)\cdot \s_{(p,q-1)}(x,y[q])}_G
            - \underbrace{\T_{(p,q)}(x,y)\cdot \s_{(p,q-1)}(\alpha(x,y),\beta(y)).}_H \label{eq-Spq recursion}
    \end{align}
First assume $p\geq 1$. Using Proposition~\ref{prop-spqalternative} (see \eqref{eq-Spq in terms of sum}), we have
    \begin{align}
        G&=\sum\limits_{\ell=1}^p \frac{x_\ell^{p+q-2}}{\prod\limits_{\substack{k=1\\k\ne\ell}}^p(x_\ell-x_k)}\prod\limits\limits_{j=1}^{p}(x_j+y_q)\cdot \D_{(p-1,q-1)}(x[\ell],y[q])\nonumber\\
       &=\sum\limits_{\ell=1}^p \frac{x_\ell^{p+q-2}}{\prod\limits_{\substack{k=1\\k\ne\ell}}^p(x_\ell-x_k)}\cdot (x_\ell+y_q)\cdot\prod\limits\limits_{\substack{j=1 \\ j \neq l}}^{p}(x_j+y_q)\cdot \D_{(p-1,q-1)}(x[\ell],y[q])\nonumber\\
        &=\sum\limits_{\ell=1}^p \frac{x_\ell^{p+q-2}}{\prod\limits_{\substack{k=1\\k\ne\ell}}^p(x_\ell-x_k)}(x_\ell+y_q)\cdot \D_{(p-1,q)}(x[\ell],y) \quad\text{(Using \eqref{eq-dpq first identity})}\nonumber\\
        &=\sum\limits_{\ell=1}^p x_\ell^{p+q-2}\cdot \frac{(x_\ell+y_q)}{\prod\limits\limits_{\substack{k=1 \\ k \neq l}}^{p}(x_\ell-x_k)}\cdot \D_{(p-1,q)}(x[\ell],y).\label{eq-G Simplified}
    \end{align}

    Before proceeding  with the simplification of $H$, notice that from \eqref{eq-defalpha} we have
    \begin{align*}
        \alpha(x,y)[\ell]&= (\alpha_1, \dots, \alpha_{\ell-1}, \alpha_{\ell+1}, \dots, \alpha_p)\\
        &= \left(\frac{x_1}{y_q+x_1}, \dots, \frac{x_{\ell-1}}{y_q+x_{\ell-1}}, \frac{x_{\ell+1}}{y_q+x_{\ell+1}}, \dots, \frac{x_p}{y_q+x_p} \right)\\
        &= \alpha(x[\ell],y).
    \end{align*}
    Therefore, from \eqref{eq-dpqalphabeta identity}, for each $\ell$:
    \begin{align}
        \D_{(p-1,q-1)}(\alpha(x,y)[\ell],\beta(y))\nonumber&=\D_{(p-1,q-1)}(\alpha(x[\ell],y),\beta(y))\nonumber\\
        &=\frac{y_q^{pq-p-q+1} }{\prod\limits_{\substack{j=1\\j\ne\ell}}^p(x_j+y_q)^{q-1} \prod\limits_{k=1}^{q-1}(y_q-y_k)^{p-1}}\cdot \D_{(p-1,q-1)}(x[\ell],y[q]).\label{eq-D(p-1,q-1)(alpha[l],beta)}
    \end{align}
  Therefore
    \begin{align}
        H&=\T_{(p,q)}(x,y)\cdot\sum\limits_{\ell=1}^p \frac{\alpha_\ell^{p+q-2}}{\prod\limits_{\substack{k=1\\k\ne\ell}}^p(\alpha_\ell-\alpha_k)}\D_{(p-1,q-1)}(\alpha(x,y)[\ell],\beta(y))\nonumber\\
        &=\T_{(p,q)}(x,y)\sum\limits_{\ell=1}^p \frac{\alpha_\ell^{p+q-2}}{\prod\limits_{\substack{k=1\\k\ne\ell}}^p(\alpha_\ell-\alpha_k)}\cdot \frac{y_q^{pq-p-q+1} }{\prod\limits_{\substack{j=1\\j\ne\ell}}^p(x_j+y_q)^{q-1} \prod\limits_{k=1}^{q-1}(y_q-y_k)^{p-1}}\cdot \D_{(p-1,q-1)}(x[\ell],y[q])\nonumber\\&\phantom{==}\text{(Using \eqref{eq-D(p-1,q-1)(alpha[l],beta)})} \nonumber\\
        &=y_q^p\cdot \prod\limits\limits_{j=1}^{p}(x_j+y_q)^{q-1}\sum\limits_{\ell=1}^p \frac{\alpha_\ell^{p+q-2}}{\prod\limits_{\substack{k=1\\k\ne\ell}}^p(\alpha_\ell-\alpha_k)}\cdot \frac{1}{\prod\limits_{\substack{j=1\\j\ne\ell}}^p(x_j+y_q)^{q-1}}\cdot \D_{(p-1,q-1)}(x[\ell],y[q])\nonumber\\
        &=y_q^p\sum\limits_{\ell=1}^p \frac{\alpha_\ell^{p+q-2}}{\prod\limits_{\substack{k=1\\k\ne\ell}}^p(\alpha_\ell-\alpha_k)}\cdot \frac{\prod\limits\limits_{j=1}^{p}(x_j+y_q)^{q-1}}{\prod\limits_{\substack{j=1\\j\ne\ell}}^p(x_j+y_q)^{q-1}}\cdot \D_{(p-1,q-1)}(x[\ell],y[q])\nonumber\\
        &=y_q^p\sum\limits_{\ell=1}^p\left(\frac{x_\ell}{x_\ell+y_q}\right)^{p+q-2}\cdot\frac{(x_\ell+y_q)^{q-1}}{\prod\limits\limits_{\substack{k=1\\k\ne\ell}}^p(\alpha_\ell-\alpha_k)}\cdot \D_{(p-1,q-1)}(x[\ell],y[q])\nonumber\\
        &=y_q^p\sum\limits_{\ell=1}^p\frac{x_\ell^{p+q-2}}{(x_\ell+y_q)^{p+q-2}}\cdot (x_\ell+y_q)^{q-1}\frac{1}{\prod\limits\limits_{\substack{k=1\\k\ne\ell}}^p\left(\frac{x_\ell}{x_\ell+y_q}-\frac{x_k}{x_k+y_q}\right)}\cdot \D_{(p-1,q-1)}(x[\ell],y[q])\nonumber\\ &\phantom{==}\text{(Recall $\alpha_j=\frac{x_j}{x_j+y_q}$ from \eqref{eq-defalpha})}\nonumber\\
        &=y_q^p\sum\limits_{\ell=1}^p\frac{x_\ell^{p+q-2}}{(x_\ell+y_q)^{p-1}}\cdot\frac{\prod\limits\limits_{\substack{k=1\\k\ne\ell}}^p\left((x_\ell+y_q)(x_k+y_q)\right)}{y_q^{p-1}\prod\limits\limits_{\substack{k=1\\k\ne\ell}}^p(x_\ell-x_k)}\cdot \D_{(p-1,q-1)}(x[\ell],y[q])\nonumber\\
        &=y_q\sum\limits_{\ell=1}^p\frac{x_\ell^{p+q-2}}{(x_\ell+y_q)^{p-1}}\cdot\frac{(x_\ell+y_q)^{p-1}\prod\limits\limits_{\substack{k=1\\k\ne\ell}}^p(x_k+y_q)}{\prod\limits\limits_{\substack{k=1\\k\ne\ell}}^p(x_\ell-x_k)}\cdot \D_{(p-1,q-1)}(x[\ell],y[q])\nonumber\\
        &=y_q\sum\limits_{\ell=1}^px_\ell^{p+q-2}\cdot\frac{\prod\limits\limits_{\substack{k=1\\k\ne\ell}}^p(x_k+y_q)}{\prod\limits\limits_{\substack{k=1\\k\ne\ell}}^p(x_\ell-x_k)}\cdot \D_{(p-1,q-1)}(x[\ell],y[q]).\label{eq-Simplified H}
    \end{align}
  Therefore, we have $G-H=\eqref{eq-G Simplified}-\eqref{eq-Simplified H} =$
    \begin{align*}
        &\phantom{=}\sum\limits_{\ell=1}^p x_\ell^{p+q-2}\cdot \frac{(x_\ell+y_q)}{\prod\limits\limits_{\substack{k=1 \\ k \neq l}}^{p}(x_\ell-x_k)}\cdot \D_{(p-1,q)}(x[\ell],y)-y_q\sum\limits_{\ell=1}^p\frac{x_\ell^{p+q-2}\prod\limits\limits_{\substack{k=1\\k\ne\ell}}^p(x_k+y_q)}{\prod\limits\limits_{\substack{k=1\\k\ne\ell}}^p(x_\ell-x_k)}\cdot \D_{(p-1,q-1)}(x[\ell],y[q])\\
        &=\sum\limits_{\ell=1}^p\frac{x_\ell^{p+q-2}}{\prod\limits\limits_{\substack{k=1\\k\ne\ell}}^p(x_\ell-x_k)}\left[(x_\ell+y_q)\cdot \D_{(p-1,q)}(x[\ell],y)-y_q\cdot \prod\limits\limits_{\substack{k=1\\k\ne\ell}}^p(x_k+y_q)\cdot \D_{(p-1,q-1)}(x[\ell],y[q])\right]\\
        &=\sum\limits_{\ell=1}^p\frac{x_\ell^{p+q-2}}{\prod\limits\limits_{\substack{k=1\\k\ne\ell}}^p(x_\ell-x_k)}\left[(x_\ell+y_q)\cdot \D_{(p-1,q)}(x[\ell],y)-y_q\cdot \D_{(p-1,q)}(x[\ell],y)\right]\quad \text{Using  \eqref{eq-dpq first identity}}\\
        &=\sum\limits_{\ell=1}^p\frac{x_\ell^{p+q-2}}{\prod\limits\limits_{\substack{k=1\\k\ne\ell}}^p(x_\ell-x_k)}\cdot x_\ell\cdot \D_{(p-1,q)}(x[\ell],y)\\
        &=\sum\limits_{\ell=1}^p\frac{x_\ell^{p+q-1}}{\prod\limits\limits_{\substack{k=1\\k\ne\ell}}^p(x_\ell-x_k)}\cdot \D_{(p-1,q)}(x[\ell],y)= \s_{(p,q)}(x,y) \quad \text{ using Proposition~\ref{prop-spqalternative}}.
    \end{align*}
    We still need to verify the recursion relation in the case $p=0$ not covered in the computation above. In this case the sum defining $\s_{0,q}$ is empty, as the set $\cc(0,q)$ (partitions with no parts but with largest part equal to $q$) is empty, and therefore we have $\s_{(0,q)}(\cdot,y) = 0.$ On the other hand the right hand side of the recursion is
    \begin{align*}
        \left(\prod\limits\limits_{j=1}^{0}(x_j+y_q)\right)\cdot \s_{(0,q-1)}(\cdot,y[q])-\T_{(0,q)}(\cdot,y)\cdot \s_{(0,q)}(\cdot,\beta(y))
        =1\cdot0-\T_{(0,q)}(\cdot,y)\cdot0 
        =0,
    \end{align*}
    using the fact $\s_{(0,q)}(\cdot,y) = 0$ one more time.

\bibliographystyle{alpha}
\bibliography{references}

\end{document}